
\documentclass[11pt,english]{article}
\usepackage{amsmath, amsthm, latexsym, amssymb}
\usepackage{amsfonts}
\usepackage{xypic}
\usepackage{epsfig}

\usepackage{graphicx}
\usepackage{yhmath}
\usepackage{mathdots}
\usepackage{mathtools} 
\usepackage{pifont}

\input xy
\xyoption{all}

\newcommand{\shrinkmargins}[1]{
  \addtolength{\textheight}{#1\topmargin}
  \addtolength{\textheight}{#1\topmargin}
  \addtolength{\textwidth}{#1\oddsidemargin}
  \addtolength{\textwidth}{#1\evensidemargin}
  \addtolength{\topmargin}{-#1\topmargin}
  \addtolength{\oddsidemargin}{-#1\oddsidemargin}
 \addtolength{\evensidemargin}{-#1\evensidemargin}
  }

\shrinkmargins{.7}

\theoremstyle{plain}

\newtheorem{theorem}{Theorem}[section]
\newtheorem{corollary}[theorem]{Corollary}
\newtheorem{lemma}[theorem]{Lemma}
\newtheorem{proposition}[theorem]{Proposition}

\newtheorem*{teo}{Theorem}

\newtheorem{definition}[theorem]{Definition}

\theoremstyle{remark}
\newtheorem{remark}[theorem]{Remark}

\theoremstyle{definition}

\theoremstyle{fact}

\theoremstyle{claim}

\def \Z { \mathbb{Z}}
\def \Q { \mathbb{Q}}

\def \C { \mathbb{C}}

\def \det { \text{det}}

\def \tr { \text{tr}}

\begin{document}
\thispagestyle{empty}
\setcounter{tocdepth}{1}
 
 \title{The shape of $\Z/\ell\Z$-number fields.}
\author{Guillermo Mantilla-Soler, Marina Monsurr\`o}

\date{}

\maketitle

\begin{abstract}
Let $\ell$ be a prime and let $L/\Q$ be a Galois number field with Galois group isomorphic to $\Z/\ell\Z$. 
We show that  the {\it shape} of $L$, see definition \ref{shape}, is either $\frac{1}{2}\mathbb{A}_{\ell-1}$ or a fixed sub lattice depending only on $\ell$; such a dichotomy in the value of the shape only depends on the type of ramification of  $L$. This work is motivated by a result of Bhargava and Shnidman, and a previous work of the first named author, on the shape of $\Z/3\Z$ number fields.\\ 
\end{abstract}

\section{Introduction}
Let $L$ be a number field and let $O_L$ be its maximal order. Let  $O_{L}^{0}$ be the  
\textit{trace zero module} of $O_{L}$ i.e., the set $\{x \in O_L :
{\rm tr} _{L/\Q}(x)=0\}$. Let us now consider the symmetric $\Z$-bilinear form obtained by restricting the trace pairing to $O^{0}_{L}$
\begin{displaymath}
\begin{array}{cccc}
 & O^{0}_{L} \times O^{0}_{L} &\rightarrow& \Z  \\  & (x,y) & \mapsto &
\mathrm{tr}_{L/\Q}(xy);
\end{array}
\end{displaymath}
we will denote by  $q_{L}$ the \textit{integral trace zero form} i.e.,  the associated integral quadratic form.  
In \cite{bha1}, the authors use a sub-lattice  of the binary quadratic form $\langle O_{L}^{0}, q_{L}\rangle$  to count cubic fields. 
From their work, if $L$ is a Galois cubic field, one can deduce that after scaling the form $\langle O_{L}^{0}, q_{L}\rangle$ in such a way that the form is primitive, one obtains an integral binary quadratic form that is independent on the field. 
A straightforward  calculation shows that the scaling factor is $2\cdot{\rm rad}(d_{L})$, where $d_{L}$ is the discriminant of $L$ and ${\rm rad}(\cdot)$ denotes the usual radical of an integer. Throughout the paper we will use the notation ${\rm rad}_{L}:={\rm rad}(d_{L})$. 
The following result and the  explicit calculation of the scaling factor can be found in \cite[Theorem 3.1]{Manti}. 
\begin{theorem}\label{ShapeGalois3} Let $L$ be a Galois cubic field. Then, the rational binary quadratic form  $\frac{1}{2\cdot{\rm rad}_{L}}q_{L}$ is integral, primitive, and does not depend on the field $L$. In particular, any two cubic fields of the same discriminant have isometric integral trace zero forms. Furthermore,
 \[\left \langle O_{L}^{0}, \frac{1}{{\rm rad}_{L}}q_{L} \right \rangle \cong  2x^2-2xy+2y^2.\] 
 \end{theorem}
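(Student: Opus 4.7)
The plan is to use the Galois action to pin down $q_L$ up to a single rational scalar, then determine that scalar by a discriminant computation.

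Let $\sigma$ generate $\mathrm{Gal}(L/\mathbb{Q})$. For every $x \in O_L^0$, $x + \sigma(x) + \sigma^2(x) = \mathrm{tr}_{L/\mathbb{Q}}(x) = 0$, so $\sigma$ satisfies $1 + \sigma + \sigma^2 = 0$ on $O_L^0$. Setting $\omega$ to correspond to $\sigma$, this makes $O_L^0$ into a $\mathbb{Z}[\omega]$-module and $O_L^0 \otimes \mathbb{Q}$ into a one-dimensional $\mathbb{Q}(\omega)$-vector space. Since $\mathbb{Z}[\omega]$ is a PID, $O_L^0$ is free of rank one as a $\mathbb{Z}[\omega]$-module; after identifying $O_L^0 \otimes \mathbb{Q}$ with $\mathbb{Q}(\omega)$ so that $\sigma$ acts by multiplication by $\omega$, one has $O_L^0 = \gamma \cdot \mathbb{Z}[\omega]$ for some $\gamma \in \mathbb{Q}(\omega)^{\times}$.

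Next I would classify the $\sigma$-invariant symmetric $\mathbb{Q}$-bilinear forms on $\mathbb{Q}(\omega)$. The trace form is manifestly $\sigma$-invariant, and a short calculation (write any such form as $(x,y) \mapsto \mathrm{tr}_{\mathbb{Q}(\omega)/\mathbb{Q}}(x \cdot T(y))$ for some $\mathbb{Q}$-linear $T$, then impose invariance and symmetry) shows the space of such forms is one-dimensional over $\mathbb{Q}$, spanned by $(x,y) \mapsto \mathrm{tr}_{\mathbb{Q}(\omega)/\mathbb{Q}}(x \bar y)$. On the basis $\{1, \omega\}$ of $\mathbb{Z}[\omega]$ this form is exactly $2x^2 - 2xy + 2y^2$. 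Absorbing $\mathrm{Nm}_{\mathbb{Q}(\omega)/\mathbb{Q}}(\gamma)$ into the scalar gives
\[ q_L \;\cong\; c \cdot (2x^2 - 2xy + 2y^2) \quad \text{for some } c \in \mathbb{Q}_{>0}. \]

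To identify $c$ it suffices to compute the Gram determinant $D$ of $q_L$ on $O_L^0$ and equate it with $3c^2$. From the orthogonal decomposition $O_L \otimes \mathbb{Q} = \mathbb{Q}\cdot 1 \perp (O_L^0 \otimes \mathbb{Q})$ and the standard lattice-index identity, $3D = [O_L : \mathbb{Z} \oplus O_L^0]^2 \cdot d_L$; and via the trace one identifies $O_L/(\mathbb{Z} \oplus O_L^0) \cong \mathrm{tr}_{L/\mathbb{Q}}(O_L)/3\mathbb{Z}$. A local computation shows that $\mathrm{tr}_{L/\mathbb{Q}}(O_L) = \mathbb{Z}$ when $3$ is unramified in $L$ and $\mathrm{tr}_{L/\mathbb{Q}}(O_L) = 3\mathbb{Z}$ when $3$ is wildly ramified in $L$; combined with the conductor-discriminant formula $d_L = f^2$ (yielding $\mathrm{rad}_L = f$ if $3 \nmid f$ and $\mathrm{rad}_L = f/3$ if $3 \mid f$), a case check in both situations gives $D = 3\,\mathrm{rad}_L^2$. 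Hence $c = \mathrm{rad}_L$, and integrality, primitivity of $\tfrac{1}{2\mathrm{rad}_L} q_L = x^2 - xy + y^2$, and $L$-independence are then immediate.

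The main obstacle is this last case analysis: the two distinct ramification behaviours at $3$ (tame vs wild) must conspire to produce the same answer $3\,\mathrm{rad}_L^2$ for $D$, and making this explicit requires some care in tracking how $\mathrm{rad}_L$ and the index $[O_L : \mathbb{Z} \oplus O_L^0]$ each absorb the factor of $3$ in their respective cases.
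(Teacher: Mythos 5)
Your proposal is correct, and it takes a genuinely different route from the paper. The paper does not prove Theorem \ref{ShapeGalois3} directly --- it cites \cite[Theorem 3.1]{Manti} --- and its own machinery for the general $\Z/\ell\Z$ case splits into a tame branch (a Conner--Perlis ``Lagrangian'' normal integral basis $\{\sigma^{i}(e_1)\}$ with all off-diagonal traces equal, so that the Gram matrix of $O_L^0$ in the basis $e_i-e_{i+1}$ is visibly a scalar multiple of $\mathbb{A}_{\ell-1}$, the scalar being fixed by a determinant comparison) and a wild branch (cyclotomic ideal lattices over $\Z[\zeta_\ell]$). You instead exploit that $1+\sigma+\sigma^2=0$ on $O_L^0$ makes it a torsion-free, hence free, rank-one module over the PID $\Z[\omega]$, and that the space of $\sigma$-invariant symmetric $\Q$-bilinear forms on $\Q(\omega)$ is one-dimensional, spanned by $\mathrm{tr}(x\bar y)$; this pins down $q_L$ up to a positive rational scalar \emph{uniformly} in the tame and wild cases, with the dichotomy entering only in the index computation $[O_L:\Z\oplus O_L^0]=|\mathrm{tr}(O_L)/3\Z|$ (which is $3$ or $1$ according as $3$ is unramified or wildly ramified, by \cite[Proposition 2.6]{Manti}), and your bookkeeping $D=3\,\mathrm{rad}_L^2$ checks out in both cases. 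What your approach buys is a self-contained, basis-free argument that avoids invoking the existence of a Lagrangian basis (which fails in the wild case and forces the paper's case split at the structural level); what it costs is generality: for $\ell>3$ the space of invariant forms has dimension $(\ell-1)/2$ and $O_L^0$ is only locally free over $\Z[\zeta_\ell]$, so the one-line classification step breaks down and one is pushed back toward the ideal-lattice formalism the paper uses --- indeed for $\ell>3$ the tame and wild shapes genuinely differ, so no uniform answer is available.
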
 
 
 In the general case, given a number field $L$ of degree $n$, the form $q_{L}$ is an integral quadratic form of rank $n-1$. By scaling the form $q_{L}$ by a suitable positive integer $n_{L}$ one can write $q_{L}=n_{L}Q_{L}$ where $Q_{L}$ is an integral primitive quadratic form of rank $n-1$.  We define the {\it shape} of a number field as follows: 

\begin{definition}\label{shape}
Let $L$ be a number field. The shape of $L$ is the equivalence class of the quadratic form $Q_{L}$  under the natural ${\rm GL}_{\ell-1}(\Z)$ action. 
\end{definition}

The study of the shape has been of interest mostly for cubic fields: In \cite[Theorem 6.5]{Manti} and \cite[Theorem 1.3]{Manti1} it is proved that, under certain ramification hypotheses, the shape is a complete invariant. See also \cite{bha}.\\
 
Suppose now that $L$ is a quadratic number field with discriminant either odd or divisible by $8$. An elementary calculation shows that the form $\frac{1}{2\cdot{\rm rad}_{L}}q_{L}$ is an integral primitive quadratic form independent of the field $L$.  In particular, it is equivalent to $Q_{L}$. Moreover, \[\left \langle O_{L}^{0}, \frac{1}{{\rm rad}_{L}}q_{L}\right \rangle \cong 2x^2 \ \mbox{or, equivalently,} \ Q_{L} \cong x^2.\] 

Let us denote by  $\mathbb{A}_{n}$  the usual $n$-dimensional root lattice i.e., the lattice associated to the integral quadratic form \[\sum_{1 \leq i \leq n}2x_{i}^2 - \sum_{\substack{1\leq i,j \leq n \\ |i-j| = 1}}x_{i}x_{j} .\]

Then, if we look at the shape of quadratic and Galois cubic fields, we notice a clear similarity.  This can be made more explicit by observing that $2x^2-2xy+2y^2$ and $2x^2$ are the quadratic forms associated to the root lattices $\mathbb{A}_{2}$ and  $\mathbb{A}_{1}$ respectively. A natural question arises: Can this be generalized to higher dimensions? More concretely, let $\ell$ be an odd prime and let $L$ be an $\Z/\ell\Z$-extension of $\Q$ of discriminant $d_{L}$. 

\begin{itemize}

\item[(a)] Is the form $\frac{1}{{\rm rad}_{L}}q_{L}$ integral and independent of the field $L$?

\item[(b)] Is the lattice $\left\langle O_{L}^{0}, \frac{1}{{\rm rad}_{L}}q_{L} \right\rangle$  isometric to $\mathbb{A}_{\ell -1}$? 

\end{itemize}

The purpose of this paper is to answer questions (a) and (b). In the absence of wild ramification, it turns out that both (a) and the firs part of (b) are answered positively. If there is wild ramification, question (a)  still has a positive answer, but the isometry with $\mathbb{A}_{\ell -1}$  exists only in the case $\ell =3$; if $\ell >3$, the lattice $\left\langle O_{L}^{0}, \frac{1}{{\rm rad}_{L}}q_{L} \right\rangle$ can be realized as a proper sub-lattice of  $\mathbb{A}_{\ell -1}$. Furthermore, such a lattice is isometric to a scaled Craig's lattice independent of the field $L$. \newpage

The main result of this paper is:

\begin{teo}[cf. Theorem \ref{principal}]\label{thmIntro}
Let $\ell$ be an odd prime and let  $L$ be a Galois extension of $\Q$ with ${\rm Gal}(L/\Q) \cong \Z/\ell\Z$. Then, the lattice $\left\langle O_{L}^{0}, \frac{1}{{\rm rad}(d_{L})}q_{L} \right\rangle$ is an integral even lattice, which after scaling by a factor of $1/2$ is equivalent to $Q_{L}$. Moreover, there is a lattice embedding  \[\left\langle O_{L}^{0}, \frac{1}{{\rm rad}_{L}}q_{L} \right\rangle  \hookrightarrow  \mathbb{A}_{\ell -1},\] which is an isometry if and only if $\ell$ is tame in $L$. Furthermore, in the case of  wild ramification, also the image of the embedding depends only on $\ell$. 

\end{teo}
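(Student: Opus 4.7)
The plan is to decompose the problem prime-by-prime using the conductor--discriminant formula, then reassemble globally. The $\ell-1$ nontrivial characters of $G={\rm Gal}(L/\Q)$ are Galois-conjugate, so they share a common Artin conductor $f$; this yields $d_L=f^{\ell-1}$ and ${\rm rad}_L={\rm rad}(f)$, equal to the product of ramified primes. Since $[L:\Q]=\ell$ is prime, every ramified prime is totally ramified. The dichotomy of the theorem corresponds to whether $\ell\nmid f$ (tame) or $\ell^{2}\|f$, the only wild conductor exponent possible for an order-$\ell$ character of $\Q_\ell^\times$. Moreover, tame ramification at $p\neq\ell$ forces $p\equiv 1\pmod{\ell}$, ensuring $\mu_\ell\subset\Q_p$.

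At each tame ramified prime $p$, Kummer theory supplies an Eisenstein uniformizer $\pi$ with $\pi^{\ell}=pu$ for some $u\in\Z_p^\times$. The vanishing of $\tr(\pi^k)$ for $1\le k\le\ell-1$ combined with $\tr(\pi^{\ell})=\ell pu$ implies that $O_L^{0}\otimes\Z_p$ has basis $\pi,\pi^2,\ldots,\pi^{\ell-1}$ and that the Gram matrix of $q_L$ in this basis is antidiagonal with entries $\ell pu$. Dividing by $p$ produces a $\Z_p$-unimodular even form, $\Z_p$-isometric to the localization of $\mathbb{A}_{\ell-1}$. At the wildly ramified prime $\ell$, a more delicate analysis is required: the local different has valuation $2(\ell-1)$, and while $q_L$ remains divisible by $\ell$, the quotient form is no longer unimodular---its $\ell$-adic determinant has valuation $\ell-2$. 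The crucial observation is that every wildly ramified $\Z/\ell\Z$-extension of $\Q_\ell$ has the same upper-numbering ramification filtration (a single break at $1$), so the integral trace form on $O_L^{0}\otimes\Z_\ell$ is $\Z_\ell$-isometric for all such extensions; this is the source of the field-independence claimed in the wild case.

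Globally, local integrality implies $\Z$-integrality of $\frac{1}{{\rm rad}_L}q_L$, and evenness follows from $\tr(x^2)=-2\,e_2(x)$ for $x\in O_L^{0}$ (using $\sum_\sigma\sigma(x)=0$ so that $e_2(x)\in\Z$ is the second elementary symmetric polynomial in the Galois conjugates), combined with the ${\rm rad}_L$-divisibility of $e_2(x)$ supplied by the local analysis. To construct the embedding $\left\langle O_L^{0},\frac{1}{{\rm rad}_L}q_L\right\rangle\hookrightarrow\mathbb{A}_{\ell-1}$, one exploits the $G$-equivariant structure: under the normal-basis isomorphism $L\cong\Q[G]$ of $G$-modules, the trace-zero subspace maps to the augmentation hyperplane of $\Q[G]$, and the trace form is closely tied to the standard permutation pairing on $\Z[G]\cong\Z^{\ell}$ whose restriction to the augmentation hyperplane is exactly $\mathbb{A}_{\ell-1}$. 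The local computations then pin down the global image: $\det(\frac{1}{{\rm rad}_L}q_L)=\ell$ in the tame case (forcing isometry with $\mathbb{A}_{\ell-1}$), and $\ell^{\ell-2}$ in the wild case (forcing a sublattice of index $\ell^{(\ell-3)/2}$, trivial exactly when $\ell=3$, and in general a scaled Craig lattice). The principal obstacle is the wild-case local calculation at $\ell$ and the identification of the resulting sublattice with a Craig lattice independent of $L$; this will require either an explicit coordinate argument or a conceptual one leveraging the uniqueness of the wild ramification break.
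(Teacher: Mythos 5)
Your local-to-global strategy is genuinely different from the paper's (which works globally with a Conner--Perlis Lagrangian basis in the tame case and with cyclotomic ideal lattices in the wild case), but as written it has two real gaps. In the tame case, your local computations at each ramified $p$ at best show that $\left\langle O_L^{0},\frac{1}{{\rm rad}_L}q_L\right\rangle$ lies in the \emph{genus} of $\mathbb{A}_{\ell-1}$; for large $\ell$ that genus can contain more than one isometry class, so ``$\det=\ell$ forces isometry with $\mathbb{A}_{\ell-1}$'' is not a valid deduction on its own (it would be valid only after you have already produced a finite-index embedding into $\mathbb{A}_{\ell-1}$, which is the very thing at issue). Your proposed source of that embedding --- the normal-basis isomorphism $O_L\cong\Z[G]$ carrying $O_L^{0}$ to the augmentation ideal --- does give a module isomorphism (Hilbert--Speiser, since $L/\Q$ is tame abelian), but the trace form in an arbitrary normal integral basis $\{\sigma(e_1)\}$ is a general circulant with off-diagonal entries ${\rm tr}(e_1\sigma(e_1))$ that need not all coincide; it is \emph{not} the permutation pairing, and ``closely tied to'' hides exactly the missing input. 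The paper closes this gap with the Conner--Perlis Lagrangian basis theorem (Proposition \ref{Lagrangian}): one can choose the generator $e_1$ so that all ${\rm tr}(e_1\sigma(e_1))$, $\sigma\neq{\rm Id}$, are equal, after which the Gram matrix in the basis $w_i=e_i-e_{i+1}$ is literally a scalar multiple of that of $\mathbb{A}_{\ell-1}$ and the determinant comparison finishes. You need this (or an equivalent self-dual-normal-basis statement) as an explicit ingredient.

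In the wild case the gap is larger and you partly acknowledge it. The assertion that all wildly ramified $\Z/\ell\Z$-extensions of $\Q_\ell$ have the same ramification filtration does not by itself yield that their integral trace forms on $O_L^{0}\otimes\Z_\ell$ are $\Z_\ell$-isometric, and even granting that, local isometry at every place again only identifies the genus, not the class, of the global lattice; nor does it produce the embedding into $\mathbb{A}_{\ell-1}$ or the identification with the scaled Craig lattice $\frac{1}{2\ell}\mathbb{A}^{(\ell-1)}_{\ell-1}$. The paper's route is to quote the Conner--Perlis computation (Theorem \ref{RamConnerPerlis}) identifying $\langle O_L^{0},q_L\rangle$ globally with the cyclotomic ideal lattice $\left\langle\Z[\zeta_\ell],\frac{m_L}{\ell}(\zeta_\ell-\zeta_\ell^{-1})^{2(\ell-1)}\right\rangle$, and then to build the embedding into $\mathbb{A}_{\ell-1}\cong I_{\delta_\ell}$ by multiplication by an explicit $\eta$ with $\alpha_\ell/\delta_\ell=\eta\overline{\eta}$ (Lemma \ref{veryuseful}, Proposition \ref{calculation}); the field-independence in the wild case comes from this explicit cyclotomic description, not from a uniqueness statement about ramification breaks. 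Until you supply substitutes for these two inputs, the argument does not close. Your preliminary reductions (conductor--discriminant computation of $d_L$ and ${\rm rad}_L$, $p\equiv 1\pmod\ell$, integrality and evenness of $\frac{1}{{\rm rad}_L}q_L$, and the determinant values $\ell$ and $\ell^{\ell-2}$) are correct and agree with Lemmas \ref{valuations}--\ref{CalcDetZer} and Proposition \ref{TheShapeIs}.
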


\begin{remark} In Theorem \ref{principal}, see below, we give an explicit description of $\left\langle O_{L}^{0}, \frac{1}{{\rm rad}(d_{L})}q_{L} \right\rangle$ in terms of Craig's lattices.
In polynomial terms Theorem \ref{principal} says:

\[Q_{L} \cong \begin{cases}   \sum_{1 \leq i \leq \ell-1}x_{i}^2 - \sum_{\substack{1\leq i,j \leq \ell -1 \\ i+1 = j}}x_{i}x_{j}  & \mbox{if $L/\Q$ is tame} \\ 
 \sum_{1 \leq i \leq \ell-1}(\frac{\ell -1}{2})x_{i}^2 - \sum_{\substack{1\leq i,j \leq \ell -1 \\ i < j}}x_{i}x_{j} & \mbox{if $L/\Q$ has wild ramification,} \end{cases} \] and the second integral quadratic form can be embedded in the first for every $\ell$. Notice that for $\ell=3$, and only in this case, these two forms are equivalent. 
\end{remark}

\subsubsection{Our definition of shape}
Even though our definition of shape is inspired by the definition of shape of cubic rings given in \cite{bha} the two forms are not equivalent in general. 
In the definition given in \cite{bha}, the authors replace the lattice $O_{L}^{0}$ by the sub-lattice of it given by \[\widetilde{O_{L}^{0}}:=\{x \in \Z +[L:\Q]O_{L} \mid {\rm tr}_{L/\Q}(x)=0\}.\]  
For a Galois cubic field $L$, Theorem \ref{ShapeGalois3} says that \[Q_{L} \cong x^2-xy+y^2;\] hence, by the results in \cite{bha} on $\Z/3\Z$-extensions,  the two notions of shape are the same for such fields. 
More generally, if $L$ is a $\Z/\ell\Z$-number field in which $\ell$ ramifies, one can verify that $\widetilde{O_{L}^{0}}=\ell O_{L}^{0}$, which implies that the two notions of shape are the same for wild $\Z/\ell\Z$-number fields. For tame $\Z/\ell\Z$-extensions the two forms are equivalent only for $\ell=3$. However, for such extensions, the change of basis between the modules $O_{L}^{0}$ and $\widetilde{O_{L}^{0}}$ is canonical and only depends  on $\ell$. In particular, in the appropriate setting, all the results in this paper can be written in terms of the shape as defined by Bhargava and Shnidman.

\subsubsection{Conner \& Perlis}

It is important to mention that even though the motivation for this work comes from the results in \cite{bha} and \cite{Manti}, most of the tools we used were developed by Conner and Perlis in chapter IV of their book \cite{conner}.

\section{Proofs of results}

\subsection{Facts about $\Z/\ell\Z$-extensions}

In this section, we will prove that the shape $Q_{L}$, as defined above, only depends on the discriminant of the field $L$. To achieve this, we show that the scaling factor that transforms the trace into the shape can be canonically written in terms of the discriminant (see Proposition \ref{TheShapeIs}).

\begin{proposition}\label{ZeroInsidePrime}
Let $\ell$ be an odd prime and let $L/\Q$ be a Galois $\Z/\ell\Z$-extension. Let $p \neq \ell$ be a prime that ramifies in $L$ and let $\mathcal{P}$ the unique prime ideal of $O_{L}$ lying above $p$. Then,  $O_{L}^{0}+ p\Z $ is contained in $\mathcal{P}$ as a  $\Z$-module.

\end{proposition}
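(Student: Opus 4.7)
The plan is to establish the two inclusions $p\Z \subseteq \mathcal{P}$ and $O_L^0 \subseteq \mathcal{P}$ separately, after which the conclusion $O_L^0 + p\Z \subseteq \mathcal{P}$ follows by addition of subgroups. The first inclusion is essentially automatic: since $p\neq \ell$ ramifies in the cyclic degree-$\ell$ extension $L/\Q$, the ramification index $e_p$ divides $\ell$ and is strictly greater than $1$, forcing $e_p = \ell$. Hence $pO_L = \mathcal{P}^\ell$ and in particular $p\Z \subseteq \mathcal{P}$.

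For the second inclusion I would argue at the level of residue fields. The fundamental identity $e_p f_p g_p = \ell$ combined with $e_p = \ell$ forces $f_p = g_p = 1$, so $O_L/\mathcal{P} \cong \mathbb{F}_p$. Consequently every element of $\gal(L/\Q)$ acts trivially on the residue field, simply because $\mathbb{F}_p$ has no nontrivial automorphisms (alternatively, the decomposition group at $\mathcal{P}$ equals the inertia group, so its image in $\mathrm{Aut}(O_L/\mathcal{P})$ is trivial).

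Fixing a generator $\sigma$ of $\gal(L/\Q) \cong \Z/\ell\Z$ and picking $x \in O_L^0$, the previous observation gives $\sigma^i(x) \equiv x \pmod{\mathcal{P}}$ for every $i = 0, \ldots, \ell-1$. Summing these congruences yields
\[ 0 \;=\; \tr_{L/\Q}(x) \;=\; \sum_{i=0}^{\ell-1}\sigma^i(x) \;\equiv\; \ell\, x \pmod{\mathcal{P}}. \]
Since $\gcd(\ell, p) = 1$, $\ell$ is a unit in the residue field $O_L/\mathcal{P} = \mathbb{F}_p$, so $x \in \mathcal{P}$.

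The only step that requires a bit of care is the claim that $\sigma$ acts trivially on $O_L/\mathcal{P}$; this is precisely where the hypothesis $p \neq \ell$ enters, as it guarantees $p$ is tamely ramified and that the residue field stays equal to $\mathbb{F}_p$. Once this observation is in place, the rest is a one-line trace computation and the proof is complete.
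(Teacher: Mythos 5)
Your proof is correct, but it takes a genuinely different route from the paper's. The paper argues by contradiction using the $\Z$-module structure: since $p$ is totally ramified, $[O_L:\mathcal{P}]=p$, so $\mathcal{P}$ is a maximal $\Z$-submodule of $O_L$; if some $a\in O_L^{0}$ lay outside $\mathcal{P}$ one could write $1=an+\beta$ with $\beta\in\mathcal{P}$, and applying the trace (using $\tr_{L/\Q}(a)=0$ and $\tr_{L/\Q}(\mathcal{P})\subseteq\mathcal{P}\cap\Z=p\Z$) gives $\ell\in p\Z$, contradicting $p\neq\ell$. You instead argue directly at the level of the residue field: since $e=\ell$ forces $f=g=1$, every $\sigma^{i}$ preserves $\mathcal{P}$ and acts trivially on $O_L/\mathcal{P}\cong\mathbb{F}_p$, so $0=\tr_{L/\Q}(x)\equiv\ell x\pmod{\mathcal{P}}$, and inverting $\ell$ modulo $p$ gives $x\in\mathcal{P}$. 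Both arguments use Galois invariance of $\mathcal{P}$ and hinge on $p\neq\ell$ at the same spot (the paper to reach the contradiction $\ell\in p\Z$, you to invert $\ell$ in $\mathbb{F}_p$), but your key lemma is the triviality of the residue extension rather than the maximality of $\mathcal{P}$ as a $\Z$-submodule. Your version is direct rather than by contradiction and yields the slightly sharper congruence $\tr_{L/\Q}(x)\equiv\ell x\pmod{\mathcal{P}}$ for all $x\in O_L$; the paper's version is marginally more self-contained in that it does not invoke the decomposition/inertia formalism. Also note you prove the stronger full statement by explicitly adding in $p\Z\subseteq\mathcal{P}$, whereas the paper remarks that it suffices to show $O_L^{0}\subseteq\mathcal{P}$ and leaves that trivial half implicit; either way is fine.
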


\begin{proof}

It is enough to show that  $O_{L}^{0} \subseteq \mathcal{P}.$ Since $p$ is totally ramified we have that $[O_{L}: \mathcal{P}]=p$. In particular, $\mathcal{P}$ is a maximal $\Z$-submodule of $O_{L}$. If we suppose that there exists an element  $a\in O_{L}^{0}$ such that $ a \not \in \mathcal{P}$ then  we would be able to write $O_{L}= a \Z+\mathcal{P}$; hence, there should  exist $\beta \in \mathcal{P}$ and $n \in \Z$ such that \[1=an+\beta.\] Since $\mathcal{P}$ is invariant by the action of Gal$(L/\Q)$ we know that ${\rm tr}_{L/\Q}(\beta) \in \mathcal{P} \cap \Z =p\Z$ for any $\beta \in \mathcal{P}$. By applying  the trace operator, one can see that the above equality contradicts that $p \neq \ell$; hence such an element $a$ does not exist and $O_{L}^{0} \subseteq \mathcal{P}.$

\end{proof}

\begin{corollary}\label{CoroIntLat}
Let $\ell$ be an odd prime and let $L/\Q$ be a Galois $\Z/\ell\Z$-extension. Then, for all $a,b \in O_{L}^{0}$ and for all ramified prime $p$ in $L$, we have that $p$ divides ${\rm tr}_{L/\Q}(ab)$. In other words $\left \langle O_{L}^{0}, \frac{1}{{\rm rad}_{L}}q_{L}\right \rangle$ is an integral lattice.
\end{corollary}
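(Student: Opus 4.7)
The plan is to verify divisibility by each ramified prime separately and then combine. Given $a,b \in O_{L}^{0}$, we must show that for every prime $p$ ramified in $L$ we have $p \mid {\rm tr}_{L/\Q}(ab)$; since ramified primes are precisely the prime divisors of $d_{L}$, their product equals ${\rm rad}_{L}$, and pairwise coprimality then yields ${\rm rad}_{L}\mid{\rm tr}_{L/\Q}(ab)$, which is exactly the integrality claim.

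The first step handles the tame primes, which is essentially immediate from the preceding proposition. Let $p \neq \ell$ be a ramified prime and let $\mathcal{P}$ be the unique prime of $O_{L}$ above $p$. By Proposition \ref{ZeroInsidePrime} we have $O_{L}^{0}\subseteq \mathcal{P}$, so $ab \in \mathcal{P}^{2}\subseteq \mathcal{P}$. Because $\mathcal{P}$ is the only prime over $p$, it is stable under the Galois action, hence $\sigma(ab)\in\mathcal{P}$ for every $\sigma \in \gal(L/\Q)$. Summing gives ${\rm tr}_{L/\Q}(ab)\in \mathcal{P}\cap\Z = p\Z$.

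The second step treats the potential wild prime $p=\ell$, which Proposition \ref{ZeroInsidePrime} explicitly excludes, so a different argument is needed and this is the main (though mild) obstacle. Assuming $\ell$ ramifies in $L$, it must be totally ramified, so there is a unique prime $\mathcal{L}$ above $\ell$ with residue field $O_{L}/\mathcal{L}\cong \F_{\ell}$. Since the residue extension is trivial, the inertia group coincides with the full decomposition group, which equals $\gal(L/\Q)$; thus the Galois group acts trivially on $O_{L}/\mathcal{L}$. Consequently, for any $c \in O_{L}$ we have $\sigma(c)\equiv c \pmod{\mathcal{L}}$ for every $\sigma$, and so
\[
{\rm tr}_{L/\Q}(c) \;=\; \sum_{\sigma \in \gal(L/\Q)}\sigma(c) \;\equiv\; \ell c \;\equiv\; 0 \pmod{\mathcal{L}}.
\]
Applying this with $c=ab$ yields ${\rm tr}_{L/\Q}(ab)\in \mathcal{L}\cap\Z = \ell\Z$. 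Note that this wild argument in fact gives more: $\ell$ divides the trace of every element of $O_{L}$, not just those in $O_{L}^{0}$.

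Combining the two cases, ${\rm tr}_{L/\Q}(ab)$ is divisible by each ramified prime of $L$, hence by ${\rm rad}_{L}$. This shows that $\frac{1}{{\rm rad}_{L}}q_{L}$ takes integer values on $O_{L}^{0}\times O_{L}^{0}$, proving that $\langle O_{L}^{0},\frac{1}{{\rm rad}_{L}}q_{L}\rangle$ is an integral lattice.
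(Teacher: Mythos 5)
Your proof is correct and follows the same decomposition as the paper's: the tame ramified primes $p \neq \ell$ are handled via Proposition \ref{ZeroInsidePrime} exactly as in the text (with the Galois-stability of $\mathcal{P}$ giving ${\rm tr}_{L/\Q}(ab) \in \mathcal{P}\cap\Z = p\Z$), and the wild prime is handled by showing $\ell \mid {\rm tr}_{L/\Q}(c)$ for every $c \in O_{L}$. The only difference is that for the wild case the paper simply cites \cite[Proposition 2.6]{Manti}, whereas you supply the short inertia argument (trivial residue extension, so $\sigma(c)\equiv c$ and ${\rm tr}_{L/\Q}(c)\equiv \ell c\equiv 0$ modulo the prime above $\ell$) that proves that fact directly; this is correct and makes the corollary self-contained.
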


\begin{proof}
Let $p$ be a ramified prime. If $p \neq \ell$ we know by Proposition  \ref{ZeroInsidePrime} that $ab \in \mathcal{P}$ for all $a,b \in O_{L}^{0}$, where $\mathcal{P}$ is the unique prime ideal in $O_{L}$ lying over $p$. In particular,  ${\rm tr}_{L/\Q}(ab) \in {\rm tr}_{L/\Q}(\mathcal{P}) \subseteq \mathcal{P} \cap \Z =p\Z.$ If $p=\ell$ then it follows from \cite[Proposition 2.6]{Manti} that ${\rm tr}_{L/\Q}(ab) \subseteq \ell\Z$ for all $a,b \in O_{L}$ and, a fortiori, for all $a,b\in O_{L}^{0}$.
\end{proof}

\begin{definition}
Let $L$ be  a number field of discriminant $d_{L}$. {\bf The radical discriminant of $L$}, denoted by ${\rm rad}_{L}$, is the square free integer divisible by only ramified primes in $L$ and that has the same sign as $d_{L}$. 
\end{definition}

\begin{lemma}\label{valuations}
Let $\ell$ be an odd prime and let $L/\Q$ be a Galois $\Z/\ell\Z$-extension. Let $n_{L}$ be the product of primes not equal to $\ell$ that ramify in $L$ and let \[\delta_{\ell}(L) =\begin{cases} 1 & \mbox{if $\ell$ ramifies in $L$}, \\  0 & \mbox{otherwise.}\end{cases}\] Then, \[\mathrm{disc}(L)=n_{L}^{\ell-1}(\ell^{\delta_{\ell}(L)})^{2(\ell-1)}  \ and \  {\rm rad}_{L}=\ell^{\delta_{\ell}(L)}n_{L}.\] In particular, any two degree $\ell$  Galois number fields $K$ and $L$ have the same discriminant if and only if they have the same radical discriminant.
\end{lemma}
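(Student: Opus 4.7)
Since $\gal(L/\Q) \cong \Z/\ell\Z$ has odd order, it contains no element of order $2$; hence complex conjugation acts trivially on $L$, the field $L$ is totally real, and $d_L > 0$. This will take care of the sign matching in the second formula. The natural tool for the discriminant computation itself is the conductor-discriminant formula, which in our situation is remarkably clean because the Galois group is simple.

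The plan is to first write
\[
d_{L} \;=\; \prod_{\chi} \mathfrak{f}(\chi),
\]
where $\chi$ runs over the characters of $\gal(L/\Q)$. The trivial character contributes a factor of $1$. Each of the remaining $\ell-1$ characters has order $\ell$, and since $\Z/\ell\Z$ has no proper nontrivial subgroup, every such character has trivial kernel and therefore cuts out the same field $L$. Consequently the $\ell-1$ nontrivial characters share a common Artin conductor $\mathfrak{f}$, and
\[
d_L \;=\; \mathfrak{f}^{\,\ell-1}.
\]
It then suffices to compute the prime-by-prime exponents of $\mathfrak{f}$.

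The second step is the local analysis. For a prime $p\neq\ell$ ramifying in $L$, the ramification is tame (since $p \nmid [L:\Q] = \ell$), so the inertia group acts via a character of the tame quotient and the conductor exponent at $p$ equals $1$. At the prime $\ell$ (when it ramifies) the analysis is slightly more delicate: an order-$\ell$ character of $\Q_{\ell}^{\times}$ cannot factor through $(\Z/\ell\Z)^{\times}$, whose order $\ell-1$ is prime to $\ell$, so the smallest modulus through which it can factor is $\ell^{2}$ (and $(\Z/\ell^{2}\Z)^{\times}$ indeed has a unique quotient of order $\ell$). Therefore the conductor exponent at $\ell$ equals $2$ precisely when $\ell$ is ramified. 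Combining the two cases gives
\[
\mathfrak{f} \;=\; n_{L}\,\ell^{\,2\,\delta_{\ell}(L)}.
\]

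Substituting into $d_L = \mathfrak{f}^{\ell-1}$ yields the claimed formula $d_{L} = n_{L}^{\ell-1}\,(\ell^{\delta_{\ell}(L)})^{2(\ell-1)}$, and since $n_L$ is squarefree and coprime to $\ell$, the radical is $n_{L}\,\ell^{\delta_{\ell}(L)}$, which is positive and so matches the sign of $d_L$ by the observation of the first paragraph; hence ${\rm rad}_{L} = \ell^{\delta_{\ell}(L)}n_{L}$. The final ``in particular'' statement is then automatic: both $d_L$ and ${\rm rad}_L$ are determined by, and in turn determine, the pair $(n_L, \delta_\ell(L))$ through the two explicit formulas, so for degree-$\ell$ Galois fields the equalities $d_K = d_L$ and ${\rm rad}_K = {\rm rad}_L$ are equivalent. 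I do not foresee a real obstacle; the only delicate point is the conductor exponent at $\ell$, but this is a standard consequence of the structure of $(\Z/\ell^{n}\Z)^{\times}$.
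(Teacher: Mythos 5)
Your proof is correct, but it takes a genuinely different route from the paper's. The paper computes the discriminant valuations directly from the different: for a tame totally ramified prime $p\neq\ell$ it quotes Serre's formula $v_{p}(\mathrm{disc}(L))=e-1=\ell-1$, and at a wildly ramified $\ell$ it writes $v_{\ell}(\mathrm{disc}(L))=(N_{L}+2)(\ell-1)$ in terms of the higher ramification groups and then uses the a priori bound $v_{\ell}(\mathrm{disc}(L))\le 2\ell-1$ to force $G_{i}=1$ for $i\ge 2$, giving the exponent $2(\ell-1)$. You instead use the conductor--discriminant formula: all $\ell-1$ nontrivial characters of $\Z/\ell\Z$ have trivial kernel, hence a common Artin conductor $\mathfrak{f}$, so $d_{L}=\mathfrak{f}^{\ell-1}$, and the exponents of $\mathfrak{f}$ are computed locally ($1$ at a tame ramified prime, $2$ at $\ell$). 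The two arguments encode the same local fact --- the ramification filtration at $\ell$ jumps only at the first step --- but yours trades the explicit bound on the different for class field theory, which is arguably more conceptual; the paper's is more self-contained, resting only on the two propositions from Serre. Both handle the sign identically, via total reality. One point you should make fully explicit: to get the conductor exponent at $\ell$ equal to exactly $2$ (not merely $\ge 2$) you need that an order-$\ell$ character of $\Z_{\ell}^{\times}$ is automatically trivial on $1+\ell^{2}\Z_{\ell}=(1+\ell\Z_{\ell})^{\ell}$ because it kills $\ell$-th powers; your parenthetical about the unique order-$\ell$ quotient of $(\Z/\ell^{2}\Z)^{\times}$ gestures at this but does not by itself explain why the character factors through level $\ell^{2}$. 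This is a one-line fix, not a gap in substance.
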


\begin{proof} 
For an integer prime $p$, let $v_{p}$ be the standard $p$-adic valuation. If $p$ is a prime  that ramifies in $L$ then it is totally ramified. Moreover, if $p \neq \ell$ then it is tamely ramified, hence we know from \cite[Chapter III, Proposition 13]{Serre} that \[v_{p}(\mathrm{disc}(L))=\ell-1.\] If $\ell$ is ramified in $L$ then it has wild ramification, and the wild ramification group at $\ell$ is the whole Galois group ${\rm Gal}(L/\Q)$. In the notation of \cite[Chapter IV]{Serre} we have that $G_{i}={\rm Gal}(L/\Q)$ for $i=-1,0,1.$ Since all the ramification groups are either trivial or of order $\ell$ we have by \cite[Chapter IV, Proposition 4]{Serre} that \[v_{\ell}(\mathrm{disc}(L))=(N_{L}+2)(\ell-1),\] where $N_{L} = \#\{i >1 : G_{i} \neq 1\}$. Thanks to  \cite[Chapter III, Remark to Proposition 13]{Serre} we have that $v_{\ell}(\mathrm{disc}(L)) \leq 2\ell-1$,  which by the above equation implies that $N_{L}=0$. Since $L/\Q$ is an odd Galois extension it's discriminant is positive. Hence, \[\mathrm{disc}(L)=\prod_{ p \mid \mathrm{disc}(L)}p^{v_{p}(\mathrm{disc}(L)}=n_{L}^{\ell-1}(\ell^{\delta_{\ell}(L)})^{2(\ell-1)}.\] It follows that ${\rm rad}_{L}=\ell^{\delta_{\ell}(L)}n_{L}.$

\end{proof}

\begin{lemma}\label{Congruent1Primes}
Let $\ell$ be an odd prime and let $L/\Q$ be a Galois $\Z/\ell\Z$-extension.  Let $p \neq \ell$ be a prime that ramifies in $L$. Then $p \equiv 1 \pmod{\ell}$.
\end{lemma}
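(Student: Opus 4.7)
The plan is to exploit the fact that the inertia group at a tamely ramified prime injects into the multiplicative group of the residue field, and to combine this with the fact that $p$ is totally ramified.

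First, since $L/\Q$ is a Galois extension whose Galois group has prime order $\ell$, the inertia subgroup $I_{\mathcal{P}}$ at any prime $\mathcal{P}$ of $O_L$ lying above a ramified prime $p$ is either trivial or the whole group; as $p$ ramifies, $I_{\mathcal{P}} = \mathrm{Gal}(L/\Q) \cong \Z/\ell\Z$. This also shows that $p$ is totally ramified, so $\mathcal{P}$ is the unique prime above $p$ and the residue field $O_L/\mathcal{P}$ is $\mathbb{F}_p$.

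Next, since $p \neq \ell$, the inertia group has order prime to the residue characteristic, so the ramification at $\mathcal{P}$ is tame and the wild inertia subgroup is trivial. By standard local theory (see e.g.\ \cite[Chapter IV]{Serre}), the homomorphism
\[
\theta \colon I_{\mathcal{P}} \longrightarrow (O_L/\mathcal{P})^{*}, \qquad \sigma \mapsto \sigma(\pi)/\pi \bmod \mathcal{P},
\]
where $\pi$ is a uniformizer of the completion $L_{\mathcal{P}}$, is a well defined injective group homomorphism on the tame quotient of $I_{\mathcal{P}}$. In our setting, $\theta$ is defined on all of $I_{\mathcal{P}}\cong \Z/\ell\Z$ and gives an embedding of $\Z/\ell\Z$ into $\mathbb{F}_p^{*}$.

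Since $|\mathbb{F}_p^{*}|=p-1$, Lagrange's theorem forces $\ell \mid p-1$, i.e.\ $p \equiv 1 \pmod{\ell}$, as required. No step looks like a real obstacle; the one subtle point is remembering that the residue field is exactly $\mathbb{F}_p$, which is what turns the abstract divisibility $\ell \mid \#(O_L/\mathcal{P})^{*}$ into the desired congruence modulo $\ell$.
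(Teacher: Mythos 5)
Your proof is correct and follows essentially the same route as the paper: both identify the inertia group as all of ${\rm Gal}(L/\Q)\cong\Z/\ell\Z$, note that ramification is tame since $p\neq\ell$, and conclude via the embedding of the tame inertia quotient $G_{0}/G_{1}$ into $\mathbb{F}_{p}^{*}$ (citing the same part of Serre). Your write-up simply makes the map $\sigma\mapsto\sigma(\pi)/\pi$ and the Lagrange step explicit.
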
 

\begin{proof}
Thanks to the hypotheses  $p$ is a totally ramified and tame. In particular, the residue field at $p$ is $\mathbb{F}_{p}$,  $G_{0}$ the inertia subgroup at $p$ is equal to ${\rm Gal}(L/\Q)$ and wild inertia $G_{1}$ is trivial.  The result follows since $G_{0}/G_{1}$ is isomorphic to a subgroup of $\mathbb{F}_{p}^{*}$(see \cite[Chapter IV, \S2, Corollary 1]{Serre}). 
\end{proof}

\begin{lemma}\label{CalcDetZer}
Let $\ell$ be an odd prime and let $L/\Q$ be a Galois $\Z/\ell\Z$-extension. The determinant of the integral lattice $\left \langle O_{L}^{0},q_{L}\right \rangle$ is given by 
\[ \mathrm{det} \left( \left \langle O_{L}^{0}, q_{L}\right \rangle \right) = \begin{cases} \frac{{\rm disc}(L)}{\ell}  & \mbox{if $L/\Q$ is wild,} \\  \ell {\rm disc}(L)  & \mbox{otherwise.}   \end{cases} \]

\end{lemma}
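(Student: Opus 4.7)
The plan is to compute the determinant of $\langle O_L^0, q_L\rangle$ by comparing it with the known determinant of the full ring of integers lattice $\langle O_L, q_L\rangle$, whose determinant is $d_L = \mathrm{disc}(L)$.

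First, since $\mathrm{tr}_{L/\Q}(1\cdot x) = \mathrm{tr}_{L/\Q}(x) = 0$ for every $x \in O_L^0$, the rank one module $\Z\cdot 1$ and the rank $\ell - 1$ module $O_L^0$ are orthogonal with respect to the trace pairing, and they span $L$ over $\Q$. Thus $\Z\cdot 1 \oplus O_L^0$ is a full-rank sublattice of $O_L$, and
\[\det\bigl\langle \Z\cdot 1 \oplus O_L^0, q_L\bigr\rangle = \det\langle \Z\cdot 1, q_L\rangle \cdot \det\langle O_L^0, q_L\rangle = \ell \cdot \det\langle O_L^0, q_L\rangle,\]
because $\mathrm{tr}_{L/\Q}(1) = \ell$. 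On the other hand, by the standard index formula for full-rank sublattices,
\[\det\bigl\langle \Z\cdot 1 \oplus O_L^0, q_L\bigr\rangle = [O_L : \Z\cdot 1 + O_L^0]^2 \cdot d_L.\]

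The key step is therefore to compute the index $c_L := [O_L : \Z\cdot 1 + O_L^0]$. For this I would use the short exact sequence
\[0 \longrightarrow O_L^0 \longrightarrow O_L \xrightarrow{\;\mathrm{tr}_{L/\Q}\;} \mathrm{tr}_{L/\Q}(O_L) \longrightarrow 0.\]
Since the restriction of the trace to $\Z\cdot 1$ has image $\ell\Z$, and $O_L^0$ is the kernel of $\mathrm{tr}_{L/\Q}$, we obtain $c_L = [\mathrm{tr}_{L/\Q}(O_L) : \ell \Z]$. Now $\mathrm{tr}_{L/\Q}(O_L)$ is an ideal of $\Z$ containing $\mathrm{tr}_{L/\Q}(1) = \ell$, so it is either $\Z$ or $\ell \Z$. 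In the wild case Proposition 2.6 of \cite{Manti}, already invoked in the proof of Corollary \ref{CoroIntLat}, shows $\mathrm{tr}_{L/\Q}(O_L) \subseteq \ell \Z$, hence $\mathrm{tr}_{L/\Q}(O_L) = \ell \Z$ and $c_L = 1$. In the tame case $\ell$ is unramified in $L$, so the $\ell$-adic completion $O_L \otimes \Z_\ell$ is étale over $\Z_\ell$ and the local trace is surjective; thus $\mathrm{tr}_{L/\Q}(O_L)$ is not contained in $\ell\Z$, forcing $\mathrm{tr}_{L/\Q}(O_L) = \Z$ and $c_L = \ell$.

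Combining the two displayed equations gives
\[\det\langle O_L^0, q_L\rangle = \frac{c_L^2 \cdot d_L}{\ell},\]
which equals $d_L/\ell$ in the wild case and $\ell \cdot d_L$ in the tame case, as claimed. The only real obstacle is computing $c_L$ in the tame case, where one must rule out the possibility that $\mathrm{tr}_{L/\Q}(O_L) = \ell\Z$; this is handled by passing to the $\ell$-adic completion where $\ell$ is unramified, or equivalently by using the well-known fact that $\mathrm{tr}_{L/\Q}(O_L) = \Z$ whenever the different $\mathfrak{D}_{L/\Q}$ is coprime to $\ell$.
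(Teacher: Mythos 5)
Your proof is correct. The paper itself gives no argument here: it simply cites \cite[Lemma 2.3]{Manti} and \cite[Proposition 2.6]{Manti}, which between them contain exactly the content you reconstruct --- the index formula relating $\det\langle O_L^0,q_L\rangle$ to $\mathrm{disc}(L)$ via the orthogonal sublattice $\Z\cdot 1\oplus O_L^0$, and the computation of the trace ideal $\mathrm{tr}_{L/\Q}(O_L)$ (equal to $\Z$ in the tame case and to $\ell\Z$ in the wild case). So your route is essentially the paper's, written out in full rather than outsourced; the one step that genuinely requires an argument is ruling out $\mathrm{tr}_{L/\Q}(O_L)=\ell\Z$ in the tame case, and your justification (the different is coprime to $\ell$ when $\ell$ is unramified, so the local trace at $\ell$ is surjective) is the standard and correct one. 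The only thing worth stating explicitly is why ``tame'' is equivalent to ``$\ell$ unramified'' here: in a $\Z/\ell\Z$-extension any ramification at $\ell$ forces $e=\ell$ and is therefore wild, which is implicit in the paper's conventions.
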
 

\begin{proof}
This follows from \cite[Lemma 2.3]{Manti} and  \cite[Proposition 2.6]{Manti}.
\end{proof}

\begin{proposition}\label{TheShapeIs}
Let $\ell$ be an odd prime and let $L/\Q$ be a Galois $\Z/\ell\Z$-extension. Then, \[Q_{L} \cong \frac{1}{{2\rm rad}_{L}}q_{L}.\]
\end{proposition}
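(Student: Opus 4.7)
The plan is to show that $q_{L}/(2\,{\rm rad}_{L})$ is an integral primitive quadratic polynomial in a $\Z$-basis of $O_{L}^{0}$; uniqueness in the decomposition $q_{L} = n_{L} Q_{L}$ then forces $n_{L} = 2\,{\rm rad}_{L}$, giving the claimed equivalence $Q_{L} \cong q_{L}/(2\,{\rm rad}_{L})$.

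I first argue integrality. A preliminary observation is that ${\rm rad}_{L}$ is odd: wild ramification can occur only at $\ell$, and tamely ramified primes satisfy $p \equiv 1 \pmod{\ell}$ by Lemma \ref{Congruent1Primes}, ruling out $p=2$ since $\ell$ is odd. Fixing a $\Z$-basis $e_{1},\dots,e_{\ell-1}$ of $O_{L}^{0}$, Corollary \ref{CoroIntLat} gives the off-diagonal polynomial coefficients $2\,{\rm tr}_{L/\Q}(e_i e_j) \in 2\,{\rm rad}_{L}\,\Z$. For the diagonal coefficients ${\rm tr}_{L/\Q}(e_i^{2})$ I would use the identity $0 = {\rm tr}_{L/\Q}(x)^{2} = {\rm tr}_{L/\Q}(x^{2}) + 2T$, valid for any $x \in O_{L}^{0}$, where $T = \sum_{0\le a<b\le \ell-1} \sigma^{a}(x)\sigma^{b}(x)$ for a generator $\sigma$ of ${\rm Gal}(L/\Q)$. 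Since $T$ is a Galois-invariant algebraic integer, $T \in \Z$, hence ${\rm tr}_{L/\Q}(x^{2}) = -2T \in 2\Z$; combining this with Corollary \ref{CoroIntLat} and the oddness of ${\rm rad}_{L}$ yields ${\rm tr}_{L/\Q}(e_i^{2}) \in 2\,{\rm rad}_{L}\,\Z$. Thus $q_{L}/(2\,{\rm rad}_{L})$ is an integral quadratic polynomial.

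For primitivity, I would argue by contradiction. Suppose a prime $p$ divides every coefficient of $q_{L}/(2\,{\rm rad}_{L})$. Unpacking, this says every entry of the Gram matrix $({\rm tr}_{L/\Q}(e_i e_j))$ is divisible by $p\,{\rm rad}_{L}$ when $p$ is odd, and by $2\,{\rm rad}_{L}$ when $p=2$. In either case $\det\langle O_{L}^{0}, q_{L}\rangle$ is divisible by $(p\,{\rm rad}_{L})^{\ell-1}$ or $(2\,{\rm rad}_{L})^{\ell-1}$ respectively. Substituting the value of $\det\langle O_{L}^{0}, q_{L}\rangle$ from Lemma \ref{CalcDetZer} together with the formulas ${\rm disc}(L) = n_{L}^{\ell-1}(\ell^{\delta_{\ell}(L)})^{2(\ell-1)}$ and ${\rm rad}_{L}=\ell^{\delta_{\ell}(L)}n_{L}$ from Lemma \ref{valuations} reduces the divisibility to $p^{\ell-1} \mid \ell$ in the tame case and $p^{\ell-1} \mid \ell^{\ell-2}$ in the wild case (and the analogous statements $2^{\ell-1}\mid\ell$, $2^{\ell-1}\mid\ell^{\ell-2}$ for $p=2$). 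Each of these is impossible for an odd prime $\ell \ge 3$, completing the argument.

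The only input beyond the lemmas already established is the evenness of ${\rm tr}_{L/\Q}(x^{2})$ on the trace-zero module, which is essentially elementary via the symmetric-function identity above. The main obstacle is therefore bookkeeping: carrying the divisibility inequality through the tame and wild determinant formulas simultaneously, and separately for $p=2$ and $p$ odd, so that no cases are overlooked.
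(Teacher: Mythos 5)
Your proof is correct and follows essentially the same route as the paper's: integrality from Corollary \ref{CoroIntLat} plus the evenness of the trace form on $O_{L}^{0}$ and the oddness of ${\rm rad}_{L}$ (via Lemma \ref{Congruent1Primes}), and primitivity by showing that a common prime divisor of the coefficients would force an impossible divisibility of the determinant computed from Lemmas \ref{valuations} and \ref{CalcDetZer}. The only cosmetic difference is that the paper phrases the primitivity step as $d^{\ell-1}\mid\det(M)\mid\ell^{\ell-2}$ for the Gram matrix $M$ of $\frac{1}{{\rm rad}_{L}}q_{L}$, whereas you carry the raw determinant through the tame and wild cases and treat $p=2$ separately; you also supply the elementary symmetric-function identity behind the evenness claim, which the paper leaves implicit.
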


\begin{proof}
Thanks to Corollary \ref{CoroIntLat}, Lemma \ref{valuations} and Lemma \ref{CalcDetZer} we have that $\left \langle O_{L}^{0}, \frac{1}{{\rm rad}_{L}}q_{L}\right \rangle$ is an integral lattice with determinant equal to 
\[ \mathrm{det} \left( \left \langle O_{L}^{0}, \frac{1}{{\rm rad}_{L}}q_{L}\right \rangle \right) = \begin{cases} \ell^{\ell -2}  & \mbox{if $L/\Q$ is wild,} \\  \ell   & \mbox{otherwise.}   \end{cases} \]
 If $M$ is the Gram matrix representing $\left \langle O_{L}^{0}, \frac{1}{{\rm rad}_{L}}q_{L}\right \rangle$ in any given basis then all its entries are relatively prime. Otherwise, there would be a positive integer $d\neq 1$ such that $d^{\ell-1} \mid \det(M)$, and this is a contradiction since $\det(M) \mid \ell^{\ell-2}.$ Since  $\left \langle O_{L}^{0},q_{L}\right \rangle$ is an even integral lattice, and since $2$ is unramified in $L$(see Lemma \ref{Congruent1Primes}), we have that $\frac{1}{{2\rm rad}_{L}}q_{L}$ is an integral quadratic form. By the analysis on $M$ we conclude that $\frac{1}{{2\rm rad}_{L}}q_{L}$ is a primitive integral quadratic form hence, by definition, it is the shape $Q_{L}.$

\end{proof}

\subsection{Tamely ramified extensions}

Since the integral structure of tamely ramified abelian fields is well behaved, we begin dealing with the tame case.

\begin{proposition}\label{Lagrangian}
Let $\ell$ be a prime and let $L$ be a tame $\Z/\ell\Z$-extension of $\Q$. Then, there exists $e_{1} \in O_{L}$, a generator of $O_{L}$ as a $\Z[{\rm Gal}(L/\Q)]$-module, such that 
\[ {\rm tr}_{L/\Q}(e_{1}\sigma(e_{1}))={\rm tr}_{L/\Q}(e_{1}\tau(e_{1}))\] for all $\sigma, \tau \in {\rm Gal}(L/\Q) \setminus \{Id\}.$
\end{proposition}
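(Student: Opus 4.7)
The plan is to exhibit $e_{1}$ explicitly as a Gaussian period inside a cyclotomic field containing $L$. Since $L/\Q$ is tame, by Kronecker--Weber $L$ sits inside $\Q(\zeta_{f})$ for a squarefree conductor $f$ coprime to $\ell$, and by Lemma \ref{Congruent1Primes} every prime divisor of $f$ is $\equiv 1 \pmod{\ell}$. Let $H \subset (\Z/f\Z)^{*}$ be the index-$\ell$ subgroup fixing $L$ and set $e_{1} := \sum_{h \in H} \zeta_{f}^{h} = \mathrm{tr}_{\Q(\zeta_{f})/L}(\zeta_{f})$. Using the classical fact that $\zeta_{f}$ is a normal integral basis generator of $O_{\Q(\zeta_{f})}$ over $\Z[(\Z/f\Z)^{*}]$ for squarefree $f$, together with the observation that the $H$-fixed submodule of $\Z[(\Z/f\Z)^{*}]$ is the free rank-one $\Z[G]$-module generated by $\sum_{h \in H} h$, I would conclude that $e_{1}$ is a normal integral basis generator of $O_{L}$ over $\Z[G]$.

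To verify the trace condition, I would apply Fourier analysis on $G$. The Gram matrix $\bigl[\mathrm{tr}_{L/\Q}(\sigma^{i}(e_{1})\sigma^{j}(e_{1}))\bigr]_{i,j}$ is circulant with entries $a_{k} := \mathrm{tr}_{L/\Q}(e_{1}\sigma^{k}(e_{1}))$ satisfying $a_{k} = a_{-k}$. The desired identity $a_{1} = \cdots = a_{\ell-1}$ is equivalent, via simultaneous diagonalization by the characters of $G$, to the statement that the eigenvalues $\hat{a}(\chi) := \sum_{k} a_{k}\chi(\sigma^{k})$ take the same value for every nontrivial $\chi$. A short computation identifies $\hat{a}(\chi)$ with $|u_{\chi}|^{2}$, where $u_{\chi} := \sum_{g \in G} \chi(g)^{-1} g(e_{1})$ is the $\chi$-resolvent; unfolding the Gaussian period rewrites $u_{\chi}$ as the classical Gauss sum $\sum_{k \in (\Z/f\Z)^{*}} \tilde{\chi}(k)^{-1} \zeta_{f}^{k}$, where $\tilde{\chi}$ is the lift of $\chi$ to $(\Z/f\Z)^{*}$ that is trivial on $H$.

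To finish, I would show that every nontrivial $\tilde{\chi}$ has conductor exactly $f$, so that the primitive Gauss sum evaluation gives $|u_{\chi}|^{2} = f$ independent of the nontrivial $\chi$. This is where primality of $\ell$ is essential: the characters trivial on $H$ form a cyclic group of order $\ell$, and because $\ell$ is prime, every nonzero power of a generator has the same order $\ell$ on each factor $(\Z/p\Z)^{*}$ of $(\Z/f\Z)^{*} \cong \prod_{p \mid f}(\Z/p\Z)^{*}$, hence shares the same conductor, which must equal the conductor of $L$, namely $f$. The main obstacle will be this final step: reducing the trace identity to a uniform bound on Gauss sum magnitudes and exploiting the fact that in a $\Z/\ell\Z$-extension all nontrivial characters have the same conductor; without $\ell$ being prime, the Gaussian period would fail to equidistribute the off-diagonal trace values.
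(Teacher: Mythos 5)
Your proposal is correct, and it is genuinely different in character from the paper's treatment: the paper disposes of this proposition in one line by citing the existence of a Lagrangian basis in Conner--Perlis (pp.~193--195), whereas you give a self-contained construction. Your Gaussian period $e_{1}=\mathrm{tr}_{\Q(\zeta_{f})/L}(\zeta_{f})$ does generate $O_{L}$ over $\Z[G]$ (the Hilbert--Speiser fact that $\zeta_{f}$ is a normal integral basis generator for squarefree $f$, plus the identification of $H$-invariants of the group ring with $\Z[G]\cdot\sum_{h\in H}h$, is exactly what is needed), and the resolvent computation $\hat{a}(\chi)=u_{\chi}u_{\chi^{-1}}=|u_{\chi}|^{2}$ together with the evaluation $|u_{\chi}|^{2}=f$ for all nontrivial $\chi$ correctly forces $a_{1}=\dots=a_{\ell-1}$ by Fourier inversion (indeed $a_{k}=\frac{1}{\ell}(1-f)$ for $k\neq 0$, which recovers the value $b=\frac{1-{\rm rad}_{L}}{\ell}$ stated in the remark after Theorem \ref{ShapeTame}). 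Your key step --- that all nontrivial characters of a cyclic group of \emph{prime} order pull back to characters of $(\Z/f\Z)^{*}$ with the same kernel $H$, hence the same conductor $f$ --- is exactly right and is where primality enters. Two small points to tidy up: the identity $\overline{u_{\chi}}=u_{\chi^{-1}}$ uses that the conjugates of $e_{1}$ are real, which holds because $L$ is totally real for odd $\ell$ (for $\ell=2$ the statement is vacuous, so nothing is lost); and one should say explicitly that the conductor of $L$, defined as the lcm of the conductors of the characters of $G$, equals their common conductor precisely because that conductor is the same for every nontrivial character. What your approach buys over the paper's is an explicit generator and explicit trace values; what the citation buys is brevity, though the Conner--Perlis argument is itself built on Lagrange resolvents, so the underlying mechanism is the same.
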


\begin{proof}

This follows from the existence of a Lagrangian basis proven in \cite[pg 193-195]{conner}.

\end{proof}

\begin{theorem}\label{ShapeTame}
Let $\ell$ be a prime and let $L$ be a tame $\Z/\ell\Z$-extension of $\Q$. Then, \[\left\langle O_{L}^{0}, \frac{1}{{\rm rad}_{L}}q_{L} \right\rangle \cong \mathbb{A}_{\ell -1}.\]

\end{theorem}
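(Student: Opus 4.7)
The strategy is to use the Lagrangian normal integral basis provided by Proposition \ref{Lagrangian} to transport the problem into an explicit Gram-matrix computation. Let $\sigma$ generate ${\rm Gal}(L/\Q)$ and fix $e_{1} \in O_{L}$ as in Proposition \ref{Lagrangian}. Setting $e_{i} := \sigma^{i-1}(e_{1})$ for $i = 1, \ldots, \ell$, the set $\{e_{1}, \ldots, e_{\ell}\}$ is a $\Z$-basis of $O_{L}$. By Galois invariance the value $a := {\rm tr}_{L/\Q}(e_{i}^{2})$ is independent of $i$, and by the Lagrangian property the value $b := {\rm tr}_{L/\Q}(e_{i}e_{j})$ is independent of the pair $(i,j)$ whenever $i \neq j$.

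Next I would define $f_{i} := e_{i} - e_{i+1}$ for $i = 1, \ldots, \ell - 1$ and argue that $\{f_{1}, \ldots, f_{\ell - 1}\}$ is a $\Z$-basis of $O_{L}^{0}$. Each $f_{i}$ is trace-zero since ${\rm tr}(e_{i}) = {\rm tr}(e_{i+1})$, and the remaining verification reduces to checking that $T := {\rm tr}_{L/\Q}(e_{1}) \neq 0$; otherwise all $e_{i}$ would lie in the rank $\ell - 1$ sublattice $O_{L}^{0}$, contradicting the fact that $\{e_{i}\}$ has rank $\ell$. With $T \neq 0$ one identifies $O_{L}^{0}$ as the kernel of the augmentation $\sum c_{i}e_{i} \mapsto \sum c_{i}$, for which $\{f_{i}\}$ is a manifest basis.

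A direct expansion then gives $q_{L}(f_{i}, f_{i}) = 2(a - b)$, $q_{L}(f_{i}, f_{i \pm 1}) = -(a - b)$, and $q_{L}(f_{i}, f_{j}) = 0$ for $|i - j| \geq 2$; thus the Gram matrix of $\langle O_{L}^{0}, q_{L} \rangle$ in the basis $\{f_{i}\}$ is $(a - b) \cdot G$, where $G$ is the standard Gram matrix of $\mathbb{A}_{\ell - 1}$. Since $\det G = \ell$, this yields $\det \langle O_{L}^{0}, q_{L} \rangle = \ell (a - b)^{\ell - 1}$. Comparing with Lemma \ref{CalcDetZer} in the tame case (where the determinant equals $\ell \cdot {\rm disc}(L)$) and Lemma \ref{valuations} (which gives ${\rm disc}(L) = n_{L}^{\ell - 1}$ and ${\rm rad}_{L} = n_{L}$ since $\ell$ is unramified) forces $(a - b)^{\ell - 1} = {\rm rad}_{L}^{\ell - 1}$. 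Positive-definiteness of $q_{L}$ on $O_{L}^{0}$, which holds because $L$ is totally real ($\ell$ being odd), pins down $a - b = {\rm rad}_{L}$. Dividing the Gram matrix by ${\rm rad}_{L}$ yields exactly $G$, completing the identification with $\mathbb{A}_{\ell - 1}$.

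The essential content lies in Proposition \ref{Lagrangian}: the Lagrangian basis makes the off-diagonal trace entries constant, and this constancy is precisely what collapses the Gram matrix into the tridiagonal form of $\mathbb{A}_{\ell - 1}$. The only mildly delicate point is verifying that $\{f_{i}\}$ is a full basis of $O_{L}^{0}$ rather than a finite-index sublattice, and this is handled by the non-vanishing of $T$.
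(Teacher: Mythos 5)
Your proof is correct and follows essentially the same route as the paper: the Lagrangian normal basis of Proposition \ref{Lagrangian}, the difference basis $e_{i}-e_{i+1}$ of $O_{L}^{0}$, the tridiagonal Gram matrix $(a-b)G$, and a determinant comparison together with positive-definiteness to pin down $a-b={\rm rad}_{L}$. The only point where the paper is more explicit is the constancy of ${\rm tr}_{L/\Q}(e_{i}e_{j})$ for all $i\neq j$, which it justifies by decomposing the products $e_{i}e_{j}$ into Galois orbits each meeting $\{e_{1}e_{k}\}_{k\geq 2}$; your appeal to Galois invariance of the trace combined with the Lagrangian property is the same argument in compressed form.
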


\begin{proof} Let $\sigma \in {\rm Gal}(L/\Q)$ be a generator and let $e_{1} \in O_{L}$  and  such that the set \[\mathcal{B}=\{ e_{i} := \sigma^{i-1}(e_{1}) \mid 1 \leq i \leq \ell\}\] is an integral basis for $O_{L}$. By Proposition \ref{Lagrangian} we can assume that for all $2 \leq i,j \leq \ell$ we have that 
\begin{equation}\label{eqtrace} {\rm tr}_{L/\Q}(e_{1}e_{i})={\rm tr}_{L/\Q}(e_{1}e_{j}). \end{equation}   
Let $ \Gamma =\{e_{i}e_{j} \mid 1 \leq i <j \leq \ell\}$ be the set of all possible products of two elements in $\mathcal{B}$. \\ 

\begin{itemize}
\item[{\bf Claim:}] for all $\gamma \in \Gamma$ we have that ${\rm Tr}_{L/\Q}(\gamma)={\rm Tr}_{L/\Q}(e_{1}e_{2})$.

{\it Proof of the claim:}
Consider the following subsets of $\Gamma:$ \[\begin{cases} \Gamma_{1}= \{e_{1}e_{2}, e_{2}e_{3},...,e_{\ell}e_{1}\}, \\ 
\Gamma_{2}=\{e_{1}e_{3},e_{2}e_{4},...,e_{\ell}e_{2}\}, \\  \vdots \\  \\

\Gamma_{\frac{\ell-1}{2}}=\{e_{1}e_{\frac{\ell+1}{2}}, ...,e_{\ell-1}e_{\frac{\ell-3}{2}}, e_{\ell}e_{\frac{\ell-1}{2}} \}. \\ 
 \end{cases}\]

Since  $\Gamma_{i}$ is the orbit of $e_{1}e_{i+1}$ under the action of ${\rm Gal}(L/\Q)$ we have that the $\Gamma_{i}$'s are mutually disjoint. Furthermore, since $\#\Gamma_{i} =\ell$ for all $1 \leq i \leq \frac{\ell-1}{2}$ and $\displaystyle \#\Gamma={\ell \choose 2}$, the $\Gamma_{i}$'s form a complete set of orbits for $\Gamma$. Therefore, for any $\gamma \in \Gamma$, there exists some $1 \leq j \leq \frac{\ell-1}{2}$ such that $\gamma \in \Gamma_{j}$. In particular, ${\rm Tr}_{L/\Q}(\gamma)={\rm Tr}_{L/\Q}(e_{1}e_{j+1})$. Hence by (\ref{eqtrace}) we conclude that  ${\rm Tr}_{L/\Q}(\gamma)={\rm Tr}_{L/\Q}(e_{1}e_{2})$. This proves the claim.
\end{itemize}
 
Coming back to the proof of the theorem, we define $w_{i} = e_{i} -e_{i+1}$ for all $1 \leq i \leq \ell-1$. The set $\mathcal{W}=\{ w_{1},...,w_{\ell-1}\}$ is a $\Z$-basis for $O_{L}^{0}$.  Let $M$ be the Gram matrix of the integral lattice $\left\langle O_{L}^{0}, \frac{1}{{\rm rad}_{L}}q_{L} \right\rangle$ in the basis $\mathcal{W}$. 
By definition, $M$ is the integral matrix with entries $M_{i,j}= \frac{1}{{\rm rad}_{L}}{\rm Tr}_{L/\Q}(w_{i}w_{j}).$ 
 Now, we set $a={\rm Tr}_{L/\Q}(e_{1}^{2})$ and $b={\rm Tr}_{L/\Q}(e_{1}e_{2})$. 
Since all the $w_{i}$'s are conjugate we have that  for all $i$ \[{\rm Tr}_{L/\Q}(w_{i}^{2})={\rm Tr}_{L/\Q}(w_{1}^{2})={\rm Tr}_{L/\Q}(e_{1}^{2}-2e_{1}e_{2}+e_{2}^{2})=2(a-b).\] Since $L$ is totally real we have that $a >b$. 
Suppose now that $i <j$; if $j \neq i+1$, then \[{\rm Tr}_{L/\Q}(w_{i}w_{j})={\rm Tr}_{L/\Q}(e_{i}e_{j}-e_{j}e_{i+1}-e_{i}e_{j+1}+e_{i+1}e_{j+1})=b-b-b+b=0.\] For $j=i+1$ we have that \[{\rm Tr}_{L/\Q}(w_{i}w_{i+1})={\rm Tr}_{L/\Q}(e_{i}e_{i+1}-e_{i+1}e_{i+1}-e_{i}e_{i+2}+e_{i+1}e_{i+2})=b-a-b+b=b-a.\] 
The above calculations can be summarized by writing  \[M=\frac{(a-b)}{{\rm rad}_{L}}A\] where $A$ is the Gram matrix of the root lattice $\mathbb{A}_{\ell-1}$ in its standard basis. Since $\det(M)=\ell$ (cf  proof of Proposition \ref{TheShapeIs}) and also $\det(A)=\ell$,  we have that $\displaystyle \left(\frac{(a-b)}{{\rm rad}_{L}}\right)^{\ell-1}=1.$ On the other hand since $a >b$ we conclude that $M=A$, hence the result.
\end{proof}

\begin{remark}
By  looking at $\left( {\rm Tr}_{L/\Q}(e_{1})\right )^{2}$  in the above proof, one can show that $1=a+(\ell-1)b$. This, together with the value obtained above for $a-b$, allows us to conclude that $a=\frac{1+(\ell-1){\rm rad}_{L}}{\ell}$ and that $b=\frac{1-{\rm rad}_{L}}{\ell}$. These explicit values for $a$ and $b$, and the proof above, are a generalization to  $\ell>3$ of the proof of \cite[Theorem 3.1]{Manti}.
\end{remark}

\subsection{Wild ramification}

In the case wild ramification we use the theory of ideal lattices, in fact only cyclotomic ones.  For an introduction, background and  terminology on ideal lattices see \cite{Bayer0}, \cite{Bayer1} and \cite{Bayer2}. 

\subsubsection{Ideal lattices}\label{ideal}

Let $\ell$ be an odd prime and let $\zeta_{\ell} \in \C$ be a primitive $\ell$-root of unity. For a totally real element $\beta \in \Q(\zeta_{\ell})$,  we denote by $I_{\beta}$ the ideal lattice $ \langle \Z[\zeta_{\ell}], {\beta} \rangle$. In other words, $I_{\beta}$ is the positive definite lattice obtained by considering the $\Z$-module $\Z[\zeta_{\ell}]$ endowed with the bilinear pairing defined by 
\begin{displaymath}
\begin{array}{cccc}
\langle , \rangle_{\beta} : & \Z[\zeta_{\ell}] \times  \Z[\zeta_{\ell}] &  \rightarrow & \Q  \\  & (x, y) & \mapsto &
\tr_{\Q(\zeta_{\ell})/\Q}(\beta x\bar{y}).
\end{array}
\end{displaymath} 
Let $\mathcal{D}_{\ell}^{-1}$ be the inverse different of $\Q(\zeta_{\ell})$. Whenever $\beta \in \mathcal{D}_{\ell}^{-1}$, the ideal lattice $I_{\beta}$ is an integral lattice i.e., the bilinear pairing $\langle , \rangle_{\beta}$ is $\Z$-valued.

\begin{lemma}\label{veryuseful}
Let $\alpha, \beta \in \mathcal{D}_{\ell}^{-1}$ be totally real elements. Suppose that there exits $\gamma \in \Z[\zeta_{\ell}] \setminus \{ 0\}$ such that $\frac{\alpha}{\beta} =\gamma \overline{\gamma}$. Then, the $\Z$-module homomorphism \begin{displaymath}
\begin{array}{cccc}
\phi_{\gamma} : & \Z[\zeta_{\ell}]  &  \rightarrow & \Z[\zeta_{\ell}]   \\  & x & \mapsto &
\gamma x
\end{array}
\end{displaymath} 
is an injective morphism of lattices from $I_{\alpha}$ to $I_{\beta}$. Moreover, if $\gamma \in (\Z[\zeta_{\ell}])^{*}$, the morphism  $\phi_{\gamma}$ is an isometry.
\end{lemma}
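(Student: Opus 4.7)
The plan is a direct verification, since the content of the lemma is essentially one computation. First, injectivity of $\phi_\gamma$ as a $\Z$-module homomorphism is immediate: $\Z[\zeta_\ell]$ is an integral domain and $\gamma \neq 0$, so $\gamma x = 0$ forces $x = 0$.

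The main step is to show that $\phi_\gamma$ intertwines the two pairings, i.e.\ that $\langle \phi_\gamma(x), \phi_\gamma(y) \rangle_\beta = \langle x, y \rangle_\alpha$ for all $x, y \in \Z[\zeta_\ell]$. I would unfold the definition, use that complex conjugation is a ring automorphism of $\Q(\zeta_\ell)$ so that $\overline{\gamma y} = \overline{\gamma}\,\overline{y}$, and then apply the hypothesis $\gamma\overline{\gamma} = \alpha/\beta$ at the very last step:
\[
\langle \gamma x, \gamma y \rangle_\beta \;=\; {\rm tr}_{\Q(\zeta_\ell)/\Q}\!\bigl(\beta(\gamma x)\overline{(\gamma y)}\bigr) \;=\; {\rm tr}_{\Q(\zeta_\ell)/\Q}\!\bigl((\beta\gamma\overline{\gamma})\,x\overline{y}\bigr) \;=\; {\rm tr}_{\Q(\zeta_\ell)/\Q}(\alpha x \overline{y}) \;=\; \langle x, y \rangle_\alpha.
\]
Combined with injectivity, this produces the claimed lattice embedding $I_\alpha \hookrightarrow I_\beta$; the target pairing is already $\Z$-valued thanks to the hypothesis $\beta \in \mathcal{D}_{\ell}^{-1}$, so no separate integrality check is needed.

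For the last clause, if $\gamma \in (\Z[\zeta_\ell])^{*}$, then $\phi_{\gamma^{-1}}$ is a two-sided inverse of $\phi_\gamma$, so the form-preserving map $\phi_\gamma$ is a bijection and hence an isometry. I do not expect any substantive obstacle in carrying this out; the only non-formal ingredient is that complex conjugation is a ring automorphism of $\Q(\zeta_\ell)$, which is a standard fact about cyclotomic fields.
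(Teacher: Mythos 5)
Your proposal is correct and follows essentially the same route as the paper: injectivity from $\gamma\neq 0$ in the integral domain $\Z[\zeta_\ell]$, the one-line trace computation using $\overline{\gamma y}=\overline{\gamma}\,\overline{y}$ and $\gamma\overline{\gamma}=\alpha/\beta$, and $\phi_{\gamma^{-1}}$ as the inverse in the unit case. No differences worth noting.
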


\begin{proof}  The map is clearly additive and since $\gamma \neq 0$ it is injective. Let $x,y \in \Z[\zeta_{\ell}]$. Then,
\[ \langle \phi_{\gamma}(x) , \phi_{\gamma}(y) \rangle_{\beta} = \langle \gamma x , \gamma y  \rangle_{\beta} = \tr_{\Q(\zeta_{\ell})/\Q}(\beta  \gamma x\overline{ \gamma  y})=\tr_{\Q(\zeta_{\ell})/\Q}(\beta  \gamma \overline{\gamma} x\overline{ y})=\tr_{\Q(\zeta_{\ell})/\Q}(\alpha x\overline{ y})= \langle x , y \rangle_{\alpha}.\] Thus, $\phi_{\gamma}$ is a lattice morphism. Furthermore, If $\gamma$ is a unit then $\phi_{\gamma}$ is an isometry with inverse given by $\phi_{\gamma^{-1}} : I_{\beta} \to I_{\alpha}.$
\end{proof}

\begin{proposition}\label{calculation}

Let $\ell$ be an odd prime. We define $\alpha_{\ell}, \beta_{\ell}$ and $\delta_{\ell}$ in the following way:

\[\alpha_{\ell} =\frac{(\zeta_{\ell} - \zeta_{\ell}^{-1})^{2(\ell-1)}}{\ell^2}, \beta_{\ell} =\frac{(2- \zeta^{2}_{\ell} - \zeta_{\ell}^{-2})}{\ell} \  and  \ \delta_{\ell} =\frac{(2- \zeta_{\ell} - \zeta^{-1}_{\ell})}{\ell} .\] Then, 

\begin{itemize}

\item[(i)] The elements $\alpha_{\ell}, \beta_{\ell}$ and $\delta_{\ell}$ belong to $\mathcal{D}_{\ell}^{-1}$  and are all totally real.

\item[(ii)] There exist $\gamma \in (\Z[\zeta_{\ell}])^{*}$ and $\eta \in (\Z[\zeta_{\ell}]) \setminus \{0\}$ such that \[\frac{\delta_{\ell}}{\beta_{\ell}} =\gamma \overline{\gamma} \ and \ \frac{\alpha_{\ell}}{\beta_{\ell}} =\eta \overline{\eta}.\]

\end{itemize}

\end{proposition}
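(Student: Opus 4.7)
The plan is to reduce everything to valuation computations at the unique prime $\mathfrak{p} = (1-\zeta_\ell)$ of $\Z[\zeta_\ell]$ above $\ell$. I will use the standard facts that $\mathfrak{p}$ has ramification index $\ell-1$, that $\ell$ equals $(1-\zeta_\ell)^{\ell-1}$ times a unit, and that $\mathcal{D}_\ell^{-1} = (1-\zeta_\ell)^{-(\ell-2)}\Z[\zeta_\ell]$. I will also repeatedly invoke that $1+\zeta_\ell$, and more generally $(1-\zeta_\ell^a)/(1-\zeta_\ell)$ for $(a,\ell)=1$, is a cyclotomic unit.

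For part (i), totally real will be immediate: complex conjugation acts as $\zeta_\ell \mapsto \zeta_\ell^{-1}$, and each of $\alpha_\ell, \beta_\ell, \delta_\ell$ is visibly symmetric under this involution. Membership in $\mathcal{D}_\ell^{-1}$ will then be a bookkeeping exercise at $\mathfrak{p}$: using $\zeta_\ell - \zeta_\ell^{-1} = -\zeta_\ell^{-1}(1-\zeta_\ell)(1+\zeta_\ell)$, $2 - \zeta_\ell^2 - \zeta_\ell^{-2} = -(\zeta_\ell - \zeta_\ell^{-1})^2$, and $2 - \zeta_\ell - \zeta_\ell^{-1} = (1-\zeta_\ell)(1-\zeta_\ell^{-1})$, I read off the $\mathfrak{p}$-adic valuations of the three elements as $0$, $3-\ell$, and $3-\ell$, all of which are $\geq -(\ell-2)$. (As a byproduct, $\alpha_\ell$ is a global unit.)

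For part (ii), the key observation is that both ratios collapse to very simple expressions. Expanding $(1-\zeta_\ell^{\pm 2})=(1-\zeta_\ell^{\pm 1})(1+\zeta_\ell^{\pm 1})$ gives
\[\frac{\delta_\ell}{\beta_\ell} = \frac{1}{(1+\zeta_\ell)(1+\zeta_\ell^{-1})},\]
so I take $\gamma := (1+\zeta_\ell)^{-1}$, which is a unit by the cyclotomic unit remark and clearly satisfies $\gamma\overline{\gamma} = \delta_\ell/\beta_\ell$. For the second ratio, direct cancellation produces $\alpha_\ell/\beta_\ell = -(\zeta_\ell-\zeta_\ell^{-1})^{2(\ell-2)}/\ell$. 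To express this as $\eta\overline{\eta}$, I will use two identities: first, since $\overline{\zeta_\ell - \zeta_\ell^{-1}} = -(\zeta_\ell - \zeta_\ell^{-1})$ and $\ell-2$ is odd, one has $-(\zeta_\ell - \zeta_\ell^{-1})^{2(\ell-2)} = (\zeta_\ell-\zeta_\ell^{-1})^{\ell-2}\,\overline{(\zeta_\ell-\zeta_\ell^{-1})^{\ell-2}}$; second, pairing $k\leftrightarrow \ell-k$ in the factorization $\prod_{k=1}^{\ell-1}(1-\zeta_\ell^k)=\ell$ yields $\ell = \eta_1\overline{\eta_1}$ with $\eta_1 := \prod_{k=1}^{(\ell-1)/2}(1-\zeta_\ell^k)$. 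Setting $\eta := (\zeta_\ell-\zeta_\ell^{-1})^{\ell-2}/\eta_1$ then gives $\eta\overline{\eta} = \alpha_\ell/\beta_\ell$.

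The one genuine obstacle will be verifying that this $\eta$ actually lies in $\Z[\zeta_\ell]$ rather than merely in its fraction field. Since $\mathfrak{p}$ is the only prime at which $\eta_1$ is non-integral, this reduces to one more valuation check at $\mathfrak{p}$: the numerator has valuation $\ell-2$ and $\eta_1$ has valuation $(\ell-1)/2$, so $v_\mathfrak{p}(\eta) = (\ell-3)/2 \geq 0$ for every odd prime $\ell$. Nonvanishing of $\gamma$ and $\eta$ is then clear from the factorizations, completing (ii).
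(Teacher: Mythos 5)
Your proof is correct and follows essentially the same route as the paper: reduce part (i) to the prime $(1-\zeta_\ell)$ above $\ell$, and for part (ii) exhibit an explicit unit $\gamma$ (yours, $(1+\zeta_\ell)^{-1}$, differs from the paper's choice only by a root of unity) and build $\eta$ by dividing $(\zeta_\ell-\zeta_\ell^{-1})^{\ell-2}$ by a half-product $\eta_1$ with $\ell=\eta_1\overline{\eta_1}$, checking integrality via the valuation $(\ell-3)/2\geq 0$ exactly as the paper does. A minor bonus of your write-up is that it keeps track of the sign $\alpha_\ell/\beta_\ell=-(\zeta_\ell-\zeta_\ell^{-1})^{2(\ell-2)}/\ell$, which the paper's displayed formula omits but which is harmlessly absorbed because $\overline{(\zeta_\ell-\zeta_\ell^{-1})^{\ell-2}}=-(\zeta_\ell-\zeta_\ell^{-1})^{\ell-2}$.
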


\begin{proof} \ 

\begin{itemize}

\item[(i)]
Since $\alpha_{\ell}$, $\beta_{\ell}$ and $\delta_{\ell}$ are invariant under complex conjugation they are totally real elements of $\Q(\zeta_{\ell})$.  Notice that $\ell \beta_{\ell} = (\zeta_{\ell}-\zeta_{\ell}^{-1})^{2} \in \langle 1-\zeta_{\ell} \rangle $, the unique maximal ideal in $\Z[\zeta_{\ell}]$ lying over $\ell$. In particular, for any $x \in \Z[\zeta_{\ell}]$ we have that $\ell \beta_{\ell} x \in \langle 1-\zeta_{\ell} \rangle $. Since $\tr_{\Q(\zeta_{l})/\Q}(\langle 1-\zeta_{\ell} \rangle)=\ell\Z$ we have that \[\tr_{\Q(\zeta_{l})/\Q}(\ell \beta_{\ell} x) \in  \ell\Z \ \mbox{ or equivalently } \ \tr_{\Q(\zeta_{l})/\Q}(\beta_{\ell} x) \in  \Z\] i.e., 
 $\beta_{\ell} \in \mathcal{D}_{\ell}^{-1}$. Since $\beta_{\ell}$ and $\delta_{\ell}$ are conjugated we also have that $\delta_{\ell} \in \mathcal{D}_{\ell}^{-1}.$ Since $\ell =u (1-\zeta_{\ell})^{\ell -1}$ for some unit $u$, we have that $\alpha_{\ell} \in \Z[\zeta_{\ell}]$ so in particular we have that $\alpha_{\ell} \in \mathcal{D}_{\ell}^{-1}.$
\item[(ii)]  Notice that \[\frac{\delta_{\ell}}{\beta_{\ell}} =\frac{\left(\zeta_{\ell}^{\frac{\ell+1}{2}}-\zeta_{\ell}^{-(\frac{\ell+1}{2})}\right)^{2}}{(\zeta_{\ell}-\zeta_{\ell}^{-1})^{2}}\] and that \[\gamma:=\frac{\zeta_{\ell}^{\frac{\ell+1}{2}}-\zeta_{\ell}^{-(\frac{\ell+1}{2})}}{\zeta_{\ell}-\zeta_{\ell}^{-1}} \in (\Z[\zeta_{\ell}])^{*}.\] Because $\overline{\gamma}=\gamma$, we have that \[\frac{\delta_{\ell}}{\beta_{\ell}} =\gamma \overline{\gamma}.\]

Since $\text{N}_{\Q(\zeta_{l})/\Q}(1-\zeta_{\ell})=\ell$, and $\ell$ is odd, there is some $\eta_{0} \in \Z[\zeta_{\ell}]$ such that $\ell = \eta_{0} \overline{\eta_{0}}$. Furthermore, there exits a unit $u_{0}$ such that $\eta_{0}=u_{0}(1-\zeta_{\ell})^{\frac{\ell -1}{2}}.$ Let $\eta_{1}:=(\zeta_{\ell}-\zeta_{\ell}^{-1})^{\ell -2}.$ Since $3 \leq \ell$ we have that $\eta:=\frac{\eta_{1}}{\eta_{0}} \in \Z[\zeta_{\ell}]$.
The result follows, because \[\frac{\alpha_{\ell}}{\beta_{\ell}} =  \frac{(\zeta_{\ell}-\zeta_{\ell}^{-1})^{2(\ell -2)}}{\ell} =\eta \overline{\eta}.\]

\end{itemize}

\end{proof}

\begin{corollary}\label{pasting} Let $\ell$ be an odd prime and let $\alpha_{\ell}$ and $\delta_{\ell}$ be as in the above proposition. Then,  there exits an injective morphism of lattices: \[I_{\alpha_{\ell}} \hookrightarrow I_{\delta_{\ell}}.\]

\end{corollary}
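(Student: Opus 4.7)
The plan is to transport the injective morphism through the intermediate lattice $I_{\beta_\ell}$, using the two factorizations supplied by Proposition \ref{calculation}(ii) and the functorial behavior of Lemma \ref{veryuseful}. Concretely, I would chain together two applications of that lemma: one producing an injection $I_{\alpha_\ell}\hookrightarrow I_{\beta_\ell}$ and another producing an isometry $I_{\delta_\ell}\cong I_{\beta_\ell}$, and then invert the second to land in $I_{\delta_\ell}$.

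First I would apply Lemma \ref{veryuseful} to the pair $(\alpha_\ell,\beta_\ell)$. Part (ii) of Proposition \ref{calculation} gives a nonzero $\eta\in\Z[\zeta_\ell]$ with $\alpha_\ell/\beta_\ell=\eta\overline{\eta}$, so the multiplication map $\phi_\eta\colon I_{\alpha_\ell}\to I_{\beta_\ell}$ is an injective morphism of lattices. Next I would apply the same lemma to the pair $(\delta_\ell,\beta_\ell)$. The unit $\gamma\in(\Z[\zeta_\ell])^{*}$ with $\delta_\ell/\beta_\ell=\gamma\overline{\gamma}$ yields an isometry $\phi_\gamma\colon I_{\delta_\ell}\to I_{\beta_\ell}$, and by the final sentence of Lemma \ref{veryuseful} this isometry is invertible with inverse $\phi_{\gamma^{-1}}\colon I_{\beta_\ell}\to I_{\delta_\ell}$.

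Composing, the map $\phi_{\gamma^{-1}}\circ\phi_\eta\colon I_{\alpha_\ell}\to I_{\delta_\ell}$ is a composition of lattice morphisms and is injective because both factors are (the second is even bijective). This is exactly the claimed embedding. There is essentially no obstacle: all the work was already done in Proposition \ref{calculation}(ii), whose job was precisely to guarantee the two quotients $\alpha_\ell/\beta_\ell$ and $\delta_\ell/\beta_\ell$ are of the form $x\overline{x}$, with the second $x$ a unit so that the pivot through $I_{\beta_\ell}$ can be reversed. The only small point worth verifying is that $\phi_{\gamma^{-1}}$ really is the inverse lattice morphism rather than merely an additive inverse, which is immediate because $\gamma^{-1}\overline{\gamma^{-1}}=(\gamma\overline{\gamma})^{-1}=\beta_\ell/\delta_\ell$, so Lemma \ref{veryuseful} applies with the roles of $\alpha$ and $\beta$ exchanged.
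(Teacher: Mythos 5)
Your proof is correct and is essentially the paper's argument: the paper combines the two factorizations from Proposition \ref{calculation}(ii) into the single identity $\alpha_{\ell}/\delta_{\ell}=\gamma_{1}\overline{\gamma_{1}}$ with $\gamma_{1}=\eta\gamma^{-1}\in\Z[\zeta_{\ell}]\setminus\{0\}$ and applies Lemma \ref{veryuseful} once, whereas you apply the lemma twice and compose, obtaining the very same multiplication map $\phi_{\gamma^{-1}}\circ\phi_{\eta}=\phi_{\eta\gamma^{-1}}$. Your version merely makes explicit the step the paper leaves implicit (that the two quotients pivot through $I_{\beta_{\ell}}$), so there is nothing to object to.
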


\begin{proof}
Thanks to Proposition \ref{calculation} we have that \[\frac{\alpha_{\ell}}{\delta_{\ell}}=\gamma_{1} \overline{\gamma_{1}}\] for some $\gamma_{1} \in (\Z[\zeta_{\ell}]) \setminus \{0\}$.  It follows from Lemma \ref{veryuseful} that \[\phi_{\gamma_{1}} : I_{\alpha_{\ell}} \to  I_{\delta_{\ell}} \] is an embedding of ideal lattices.

\end{proof}

The following result of Conner and Perlis emphasizes the connection between the wild ramification case and the results on ideal lattices. See \cite[Lemma IV.9.3 + pg 199 3d formula + \S IV.14]{conner}. 

\begin{theorem}[Conner-Perlis]\label{RamConnerPerlis}
Let $\ell$ be an odd prime and let $L$ be a $\Z/\ell\Z$-extension of $\Q$ which is ramified at $\ell$. Let $m_{L}$ be the product of all the integer primes different from $\ell$ that are ramified in $L$. Let $\mu_{L}: =\frac{m_{L}}{\ell}(\zeta_{\ell}-\zeta_{\ell}^{-1})^{2(\ell-1)}$. Then, \[\left \langle \Z[\zeta_{\ell}], \mu_{L} \right \rangle \cong   \langle O^\circ_{L}, q_{L}\rangle.\]
\end{theorem}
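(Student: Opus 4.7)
The plan is to exhibit an explicit isometry between the trace-zero lattice $\langle O_{L}^{0}, q_{L}\rangle$ and the ideal lattice $\langle \Z[\zeta_{\ell}], \mu_{L}\rangle$ by transferring the $\Z[\zeta_{\ell}]$-module structure that the Galois group induces on the trace-zero part of $L$. Fix a generator $\sigma$ of ${\rm Gal}(L/\Q)$. The subspace $L^{0}\subset L$ coincides with the kernel of the norm element $1+\sigma+\cdots+\sigma^{\ell-1}$, equivalently with the subspace on which $\Phi_{\ell}(\sigma)=0$, where $\Phi_{\ell}$ is the $\ell$-th cyclotomic polynomial. The resulting isomorphism $\Q(\zeta_{\ell})\cong \Q[\sigma]/\Phi_{\ell}(\sigma)$, sending $\zeta_{\ell}\mapsto \sigma$, turns $L^{0}$ into a free rank-one $\Q(\zeta_{\ell})$-module.

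Next I would transport the trace form across this identification. Because ${\rm tr}_{L/\Q}(\sigma(x)y)={\rm tr}_{L/\Q}(x\sigma^{-1}(y))$, the pairing $q_{L}$ is sesquilinear with respect to the involution $\zeta_{\ell}\mapsto \zeta_{\ell}^{-1}$. Hence after picking any $\Q(\zeta_{\ell})$-basis of $L^{0}$, the form can be written as $(u,v)\mapsto {\rm tr}_{\Q(\zeta_{\ell})/\Q}(\mu\, u\bar v)$ for a unique totally real $\mu\in\Q(\zeta_{\ell})$. The computation of $\mu$ is a Gaussian-period / resolvent calculation that factors into local contributions: at each tame ramified prime $p\neq\ell$ the local factor carries a single $p$ (compatibly with Proposition~\ref{ZeroInsidePrime} and Corollary~\ref{CoroIntLat}), while at $\ell$ the wild local contribution is $(\zeta_{\ell}-\zeta_{\ell}^{-1})^{2(\ell-1)}/\ell$, the denominator $\ell$ coming from the fact that ${\rm tr}_{\Q(\zeta_{\ell})/\Q}$ already contains an extra factor of $\ell$ relative to the trace of $L/\Q$. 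Assembling these gives exactly $\mu_{L}=\tfrac{m_{L}}{\ell}(\zeta_{\ell}-\zeta_{\ell}^{-1})^{2(\ell-1)}$.

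Finally I would verify that under the chosen identification the integral lattice $O_{L}^{0}$ corresponds precisely to $\Z[\zeta_{\ell}]$. Since $O_{L}^{0}$ is stable under the action of $\Z[\sigma]$, it is a $\Z[\zeta_{\ell}]$-submodule of $L^{0}\cong\Q(\zeta_{\ell})$, hence a fractional ideal $\mathfrak{a}$ of $\Z[\zeta_{\ell}]$. The determinant of $\langle O_{L}^{0}, q_{L}\rangle$ computed in Lemma~\ref{CalcDetZer} must match $\mathrm{N}(\mathfrak{a})^{2}$ times the determinant of $\langle \Z[\zeta_{\ell}],\mu_{L}\rangle$, and a prime-by-prime local comparison pins $\mathfrak{a}$ down to a principal ideal generated by a unit. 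Choosing that generator as the image of a normal integral basis element in the tame case (standard) and of an appropriately normalized generator in the wild case completes the identification.

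The genuine obstacle is the last step at the wildly ramified prime $\ell$: showing that $O_{L}^{0}$ is a principal, trivial $\Z[\zeta_{\ell}]$-ideal requires the local structure of $O_{L}\otimes\Z_{\ell}$ as a module over the completed group ring, which is not free in the classical sense. This is exactly the content of the Conner--Perlis analysis and is where one invokes the conductor-discriminant computation of \cite[Lemma~IV.9.3 and \S IV.14]{conner}. The tame local factors and the reality of $\mu_{L}$, by contrast, are routine once the sesquilinear-versus-bilinear dictionary of the first step has been set up.
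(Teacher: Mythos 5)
The paper offers no argument for this statement beyond the pointer to Conner--Perlis (Lemma IV.9.3, the third formula on p.~199, and \S IV.14), so at the level of logical content your proposal and the paper end up resting on the same reference. Your scaffolding around that citation is the right frame and does reflect how Conner and Perlis set the problem up: $L^{0}$ is the $\Phi_{\ell}(\sigma)$-kernel, hence a rank-one $\Q(\zeta_{\ell})$-module; Galois-invariance of the trace makes $q_{L}$ sesquilinear for $\zeta_{\ell}\mapsto\zeta_{\ell}^{-1}$; so $\langle O_{L}^{0},q_{L}\rangle$ is an ideal lattice $\langle\mathfrak{a},\tr_{\Q(\zeta_{\ell})/\Q}(\mu\,u\bar v)\rangle$ for some fractional ideal $\mathfrak{a}$ and totally real $\mu$.

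The genuine gap is in your third step, and it is not merely an omitted detail: the determinant comparison you propose cannot do the work you assign to it. The determinant of $\langle O_{L}^{0},q_{L}\rangle$ determines only $\mathrm{N}(\mathfrak{a})$ (times data already fixed by $\mu$), never the ideal class of $\mathfrak{a}$; since the class group of $\Q(\zeta_{\ell})$ is nontrivial for $\ell\geq 23$, no norm or ``prime-by-prime local comparison'' of determinants can show that $\mathfrak{a}$ is principal, let alone ``generated by a unit.'' Moreover, the pair $(\mathfrak{a},\mu)$ is only well defined up to $(\lambda\mathfrak{a},\mu/\lambda\bar\lambda)$, so one must choose the generator of $L^{0}$ so that \emph{simultaneously} $\mathfrak{a}$ becomes $\Z[\zeta_{\ell}]$ and the multiplier becomes exactly $\mu_{L}=\frac{m_{L}}{\ell}(\zeta_{\ell}-\zeta_{\ell}^{-1})^{2(\ell-1)}$; reconciling these two normalizations at the wildly ramified prime $\ell$ (where $O_{L}$ is not locally free over the group ring in the naive sense) is precisely the content of the theorem. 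Your appeal to a ``normal integral basis element in the tame case'' is also out of place here, since the hypothesis is that $\ell$ ramifies and no such basis exists. You are candid that the hard step is exactly what Conner--Perlis prove, so the proposal is an honest outline matching the paper's citation, but as written it is not a proof and the determinant argument offered in its place is invalid.
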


\begin{corollary}\label{RamifiedShape1}
Let $L$ be a number field as in Theorem \ref{RamConnerPerlis}. Then,  $Q_{L}$ is the quadratic form associated to the lattice \[\left \langle \Z[\zeta_{\ell}], \frac{(\zeta_{\ell} - \zeta_{\ell}^{-1})^{2(\ell-1)}}{2\ell^2} \right \rangle.\] In particular, the shape of a wildly ramified $\Z/\ell\Z$-extension of $\Q$ only depends on the prime $\ell$.
\end{corollary}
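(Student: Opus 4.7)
The plan is to combine the explicit Conner--Perlis description (Theorem \ref{RamConnerPerlis}) with the scaling identity for the shape proven in Proposition \ref{TheShapeIs}, and then observe that the ramified rational prime $m_L$ cancels out of the resulting expression.

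First, I would recall from Proposition \ref{TheShapeIs} that $Q_L \cong \frac{1}{2\,\mathrm{rad}_L} q_L$. Since $L$ is wildly ramified at $\ell$, Lemma \ref{valuations} gives $\delta_\ell(L)=1$, so $\mathrm{rad}_L = \ell \, n_L = \ell\, m_L$, where $n_L = m_L$ is the product of the primes $\neq \ell$ ramifying in $L$. Next, applying Theorem \ref{RamConnerPerlis}, we have an isometry
\[
\langle O_L^0, q_L \rangle \;\cong\; \left\langle \Z[\zeta_\ell],\ \mu_L \right\rangle, \qquad \mu_L = \frac{m_L}{\ell}(\zeta_\ell - \zeta_\ell^{-1})^{2(\ell-1)}.
\]
Scaling the inner product on both sides by $\frac{1}{2\,\mathrm{rad}_L} = \frac{1}{2\ell\, m_L}$ produces an isometry
\[
\left\langle O_L^0,\ \tfrac{1}{2\,\mathrm{rad}_L} q_L \right\rangle \;\cong\; \left\langle \Z[\zeta_\ell],\ \tfrac{\mu_L}{2\ell\, m_L} \right\rangle.
\]

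Now I would compute the scalar on the right. A direct calculation gives
\[
\frac{\mu_L}{2\ell\, m_L} \;=\; \frac{1}{2\ell\, m_L}\cdot\frac{m_L}{\ell}(\zeta_\ell - \zeta_\ell^{-1})^{2(\ell-1)} \;=\; \frac{(\zeta_\ell - \zeta_\ell^{-1})^{2(\ell-1)}}{2\ell^2},
\]
which is exactly $\alpha_\ell/2$ in the notation of Proposition \ref{calculation}. Combining this with the identification $Q_L \cong \frac{1}{2\,\mathrm{rad}_L} q_L$ from Proposition \ref{TheShapeIs} yields
\[
Q_L \;\cong\; \left\langle \Z[\zeta_\ell],\ \frac{(\zeta_\ell - \zeta_\ell^{-1})^{2(\ell-1)}}{2\ell^2} \right\rangle,
\]
which is the first assertion.

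The "in particular" clause is then immediate: the right-hand lattice is defined purely in terms of $\Z[\zeta_\ell]$ and the totally real cyclotomic element $\alpha_\ell/2$, neither of which depends on $L$. There is no real obstacle here; the only subtlety to double-check is the bookkeeping for the scaling factor, namely that the factor $m_L$ appearing in $\mu_L$ cancels precisely against the factor $m_L$ in $\mathrm{rad}_L = \ell\, m_L$ coming from the wild ramification hypothesis.
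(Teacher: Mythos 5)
Your proposal is correct and follows exactly the paper's route: the paper's own proof simply notes that $\mathrm{rad}_{L}=\ell m_{L}$ in the wild case and cites Theorem \ref{RamConnerPerlis} together with Proposition \ref{TheShapeIs}, which is precisely the cancellation of $m_{L}$ you carry out explicitly. Your version just fills in the routine scalar bookkeeping that the paper leaves implicit.
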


\begin{proof}

Since $\ell$ is ramified in $L$ we have that ${\rm rad}_{L}=\ell m_{L}.$ Therefore the result follows from Theorem \ref{RamConnerPerlis} and Proposition \ref{TheShapeIs}.

\end{proof}

\subsubsection{Craig's lattices}\label{craig}

For positive integers $k$ and $n$ the lattice $\mathbb{A}^{(k)}_{n}$, known as {\it Craig's lattice},  denotes the lattice defined in \cite[Chapter 8 \S 6]{conway}. Recall that whenever $\ell$ is an odd prime and $n=\ell-1$ then a Craig's lattice can be realized as a cyclotomic ideal lattice:  \[ \mathbb{A}^{(k)}_{\ell-1} \cong \left \langle \langle 1-\zeta_{\ell} \rangle^{k} , \tr_{\Q(\zeta_{\ell})/\Q}(\frac{1}{\ell} x\bar{y}) \right \rangle.\]
See also \cite[\S4]{Bachoc} for background and main properties of Craig's lattices.

\begin{lemma}\label{CraigLattice}

Let $\ell$ be an odd prime, and let $\zeta_{\ell}\in \C$ be a primitive $\ell$-root of unity. We define $\alpha_{\ell}: =\frac{(\zeta_{\ell} - \zeta_{\ell}^{-1})^{2(\ell-1)}}{\ell^2}.$ Then, \[I_{\alpha_{\ell}} \cong \frac{1}{\ell}\mathbb{A}^{(\ell-1)}_{\ell-1}. \]
\end{lemma}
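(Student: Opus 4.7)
My plan is to realise $\frac{1}{\ell}\mathbb{A}_{\ell-1}^{(\ell-1)}$ as a concrete cyclotomic ideal lattice and then exhibit a multiplication-by-$\gamma$ isometry from $I_{\alpha_\ell}$ onto it, entirely in the spirit of Lemma \ref{veryuseful}. Unwinding the realisation of Craig's lattice recalled just above the lemma, $\mathbb{A}_{\ell-1}^{(\ell-1)}$ is the ideal $\langle 1-\zeta_\ell\rangle^{\ell-1}$ equipped with the pairing $\tr_{\Q(\zeta_\ell)/\Q}(\tfrac{1}{\ell}\,x\bar y)$. Scaling the form by $1/\ell$ therefore gives the model
\[
\frac{1}{\ell}\mathbb{A}_{\ell-1}^{(\ell-1)} \;\cong\; \left\langle \langle 1-\zeta_\ell\rangle^{\ell-1},\ \tr_{\Q(\zeta_\ell)/\Q}\!\left(\tfrac{1}{\ell^{2}}\,x\bar y\right) \right\rangle,
\]
which is the concrete target I want to hit.

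To send $x\in\Z[\zeta_\ell]$ into $\langle 1-\zeta_\ell\rangle^{\ell-1}$ in a way that converts the pairing $\tr(\alpha_\ell x\bar y)$ into $\tr(\tfrac{1}{\ell^{2}}x\bar y)$, I need an element $\gamma\in\Z[\zeta_\ell]$ with $(\gamma)=\langle 1-\zeta_\ell\rangle^{\ell-1}$ and $\gamma\overline{\gamma}=(\zeta_\ell-\zeta_\ell^{-1})^{2(\ell-1)}$. The natural candidate is $\gamma:=(\zeta_\ell-\zeta_\ell^{-1})^{\ell-1}$, and two elementary checks do the job. First, $\overline{\zeta_\ell-\zeta_\ell^{-1}}=-(\zeta_\ell-\zeta_\ell^{-1})$; since $\ell-1$ is even this gives $\overline{\gamma}=\gamma$, and hence $\gamma\overline{\gamma}=\gamma^{2}=(\zeta_\ell-\zeta_\ell^{-1})^{2(\ell-1)}$. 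Second, the factorisation $\zeta_\ell-\zeta_\ell^{-1}=-\zeta_\ell^{-1}(1+\zeta_\ell)(1-\zeta_\ell)$, together with the facts that $\zeta_\ell^{-1}$ and $1+\zeta_\ell$ are units in $\Z[\zeta_\ell]$ (the latter because $\ell$ is odd), yields $(\gamma)=\langle 1-\zeta_\ell\rangle^{\ell-1}$.

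With $\gamma$ in hand, the map $\phi_\gamma\colon x\mapsto \gamma x$ is a $\Z$-module isomorphism $\Z[\zeta_\ell]\to\langle 1-\zeta_\ell\rangle^{\ell-1}$, and the one-line computation
\[
\tr\!\left(\tfrac{1}{\ell^{2}}(\gamma x)\overline{(\gamma y)}\right)=\tr\!\left(\tfrac{\gamma\overline{\gamma}}{\ell^{2}}x\bar y\right)=\tr\!\left(\tfrac{(\zeta_\ell-\zeta_\ell^{-1})^{2(\ell-1)}}{\ell^{2}}x\bar y\right)=\tr(\alpha_\ell x\bar y)
\]
promotes $\phi_\gamma$ to an isometry, as required. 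There is no substantial obstacle here: the argument is essentially Lemma \ref{veryuseful} applied to a particularly clean $\gamma$. The only thing demanding care is the scaling convention for Craig's lattice, that is, ensuring that the pairing on $\frac{1}{\ell}\mathbb{A}_{\ell-1}^{(\ell-1)}$ really is $\tr(\tfrac{1}{\ell^{2}}x\bar y)$ and not something off by an extra power of $\ell$.
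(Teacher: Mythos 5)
Your proof is correct and follows essentially the same route as the paper: both arguments come down to the multiplication-by-$(\zeta_\ell-\zeta_\ell^{-1})^{\ell-1}$ isometry, the only cosmetic difference being that the paper absorbs the factor of $\ell$ into $\gamma$ (making it a unit and identifying $\frac{1}{\ell}\mathbb{A}^{(\ell-1)}_{\ell-1}$ with $I_{1}$ via $\langle 1-\zeta_\ell\rangle^{\ell-1}=\ell\Z[\zeta_\ell]$), whereas you keep the ideal model of Craig's lattice and let $\gamma$ generate $\langle 1-\zeta_\ell\rangle^{\ell-1}$. Your attention to the scaling convention is exactly the right point to check, and you handle it correctly.
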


\begin{proof}
Since $\langle 1-\zeta_{k} \rangle^{\ell-1}=\ell \Z[\zeta_{\ell}]$, we have that $\frac{1}{\ell}\mathbb{A}^{(\ell-1)}_{\ell-1} \cong \left \langle \Z[\zeta_{\ell}], 1 \right \rangle=I_{1}$. On the other hand, since $\gamma:=\frac{(\zeta_{\ell} - \zeta_{\ell}^{-1})^{\ell-1}}{\ell} \in (\Z[\zeta_{\ell}])^{*}$ and $\alpha_{\ell}=\gamma \overline{\gamma}$, we have, thanks to Lemma \ref{veryuseful}, that \[I_{\alpha_{\ell}} \cong I_{1}.\]
\end{proof}

Given a lattice $\Lambda$ we denote by $q(\Lambda)$ the equivalence class of the quadratic form associated to it. Combining Corollary \ref{RamifiedShape1} and Lemma \ref{CraigLattice} we obtain the following result:

\begin{theorem}\label{ShapeWild}
Let $\ell$ be an odd prime and let $L$ be a $\Z/\ell\Z$-extension of $\Q$ which is ramified at $\ell$. Then, \[ Q_{L} \cong q\left(\frac{1}{2\ell}\mathbb{A}^{(\ell-1)}_{\ell-1}\right). \]
\end{theorem}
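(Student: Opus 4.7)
The plan is to observe that the statement is essentially the composition of Corollary~\ref{RamifiedShape1} and Lemma~\ref{CraigLattice}, so the proof should proceed by bookkeeping on scaling factors rather than by any new computation.

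First, I would invoke Corollary~\ref{RamifiedShape1}, which asserts that in the wildly ramified case the shape $Q_L$ is the quadratic form attached to the ideal lattice
\[
\left\langle \Z[\zeta_\ell],\ \frac{(\zeta_\ell-\zeta_\ell^{-1})^{2(\ell-1)}}{2\ell^{2}}\right\rangle.
\]
Recognising the twisting element as $\alpha_\ell/2$, where $\alpha_\ell$ is the totally real element introduced in Proposition~\ref{calculation}, this lattice is exactly $\tfrac{1}{2}I_{\alpha_\ell}$ (scaling the bilinear form by $1/2$ coincides with scaling the underlying lattice by $1/\sqrt{2}$ at the level of isometry classes of quadratic forms).

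Next, I would apply Lemma~\ref{CraigLattice}, which identifies $I_{\alpha_\ell}$ with the scaled Craig lattice $\tfrac{1}{\ell}\mathbb{A}^{(\ell-1)}_{\ell-1}$. Combining this with the previous step gives
\[
\tfrac{1}{2}I_{\alpha_\ell}\ \cong\ \tfrac{1}{2}\cdot\tfrac{1}{\ell}\mathbb{A}^{(\ell-1)}_{\ell-1}\ =\ \tfrac{1}{2\ell}\mathbb{A}^{(\ell-1)}_{\ell-1},
\]
and therefore $Q_L\cong q\!\left(\tfrac{1}{2\ell}\mathbb{A}^{(\ell-1)}_{\ell-1}\right)$, as desired.

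There is no real obstacle beyond taking care that the scalar $1/2$ coming from the factor $1/(2\cdot\mathrm{rad}_L)$ in the definition of $Q_L$ (via Proposition~\ref{TheShapeIs}) is correctly pulled through both reductions. In particular one should note that $2$ is coprime to $\ell$ and unramified in $L$ (Lemma~\ref{Congruent1Primes}), so that the halving is compatible with integrality and does not interact with the ramification bookkeeping that produced $\alpha_\ell$ in Theorem~\ref{RamConnerPerlis}. Once this is checked, the theorem follows immediately from the two cited results.
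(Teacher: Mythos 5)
Your proposal is correct and follows exactly the route the paper takes: the paper states Theorem \ref{ShapeWild} as the immediate combination of Corollary \ref{RamifiedShape1} (identifying $Q_L$ with the form of the ideal lattice twisted by $\alpha_\ell/2$) and Lemma \ref{CraigLattice} (identifying $I_{\alpha_\ell}$ with $\tfrac{1}{\ell}\mathbb{A}^{(\ell-1)}_{\ell-1}$), with no additional argument. Your extra care about pulling the factor $1/2$ through is sound bookkeeping that the paper leaves implicit.
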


\subsection{Proof of the main result}
We are now able to state and to prove our main result (see \S1)

\begin{theorem}\label{principal}
Let $\ell$ be a prime and let $L$ be a $\Z/\ell\Z$-extension of $\Q$. Then, the lattice $\left\langle O_{L}^{0}, \frac{1}{{\rm rad}(d_{L})}q_{L} \right\rangle$ is an integral lattice isometric to a sub-lattice of $\mathbb{A}_{\ell -1}$. Moreover, if the type of ramification of $\ell$ is fixed then such lattice depends only on $\ell$. Specifically, 
\[ \frac{1}{2\cdot {\rm rad}_{L}}q_{L}  \cong Q_{L} \cong \begin{cases}   \frac{1}{2}\mathbb{A}_{\ell -1} & \mbox{if $L/\Q$ is tame} \\  \frac{1}{2\ell}\mathbb{A}^{(\ell -1)}_{\ell -1} & \mbox{if $L/\Q$ has wild ramification.} \end{cases} \] 
\end{theorem}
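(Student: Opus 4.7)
The plan is to assemble the pieces already established in the preceding subsections. Integrality of the lattice $\left\langle O_L^0, \frac{1}{{\rm rad}_L}q_L\right\rangle$ is exactly Corollary \ref{CoroIntLat}, and Proposition \ref{TheShapeIs} asserts that scaling by $1/2$ realizes the shape $Q_L$. Consequently, both the assertion $\frac{1}{2\cdot{\rm rad}_L}q_L \cong Q_L$ and the identification of $Q_L$ with one of the two displayed forms will follow as soon as the integral lattice $\left\langle O_L^0, \frac{1}{{\rm rad}_L}q_L\right\rangle$ has been identified in each ramification regime.

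In the tame case this is immediate: Theorem \ref{ShapeTame} gives $\left\langle O_L^0, \frac{1}{{\rm rad}_L}q_L\right\rangle \cong \mathbb{A}_{\ell-1}$, which in particular realizes the embedding trivially and yields $Q_L \cong \frac{1}{2}\mathbb{A}_{\ell-1}$.

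For the wild case I would first invoke Theorem \ref{RamConnerPerlis}. Since $\ell$ ramifies, Lemma \ref{valuations} gives ${\rm rad}_L = \ell m_L$, so dividing $\mu_L = \frac{m_L}{\ell}(\zeta_\ell-\zeta_\ell^{-1})^{2(\ell-1)}$ by ${\rm rad}_L$ produces exactly the element $\alpha_\ell = (\zeta_\ell-\zeta_\ell^{-1})^{2(\ell-1)}/\ell^2$ from Proposition \ref{calculation}. This identifies $\left\langle O_L^0, \frac{1}{{\rm rad}_L}q_L\right\rangle$ with the ideal lattice $I_{\alpha_\ell}$. Combining with Lemma \ref{CraigLattice} gives the Craig's-lattice description $I_{\alpha_\ell} \cong \frac{1}{\ell}\mathbb{A}^{(\ell-1)}_{\ell-1}$, and Proposition \ref{TheShapeIs} then delivers $Q_L \cong \frac{1}{2\ell}\mathbb{A}^{(\ell-1)}_{\ell-1}$.

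It remains to construct the embedding into $\mathbb{A}_{\ell-1}$ in the wild case. Corollary \ref{pasting} already supplies $I_{\alpha_\ell} \hookrightarrow I_{\delta_\ell}$, so the only missing ingredient is the identification $I_{\delta_\ell} \cong \mathbb{A}_{\ell-1}$. Using the realization of $\mathbb{A}_{\ell-1}$ recalled at the beginning of subsection \ref{craig} as $\left\langle \langle 1-\zeta_\ell\rangle,\, \tr_{\Q(\zeta_\ell)/\Q}(\tfrac{1}{\ell}x\bar{y})\right\rangle$, the map $x \mapsto (1-\zeta_\ell)x$ furnishes this isometry, since $(1-\zeta_\ell)(1-\zeta_\ell^{-1})/\ell = (2-\zeta_\ell-\zeta_\ell^{-1})/\ell = \delta_\ell$. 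Composing yields the desired embedding. No substantial obstacle is expected here: essentially every step is already done in the preceding subsections, and the sole genuinely new calculation is the one-line verification $I_{\delta_\ell} \cong \mathbb{A}_{\ell-1}$, making the proof largely a bookkeeping synthesis.
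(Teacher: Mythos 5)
Your proof is correct and follows essentially the same route as the paper's: Proposition \ref{TheShapeIs} for the scaling, Theorem \ref{ShapeTame} for the tame case, the Conner--Perlis identification together with Lemma \ref{CraigLattice} for the wild case, and Corollary \ref{pasting} for the embedding. The only deviation is that where the paper cites \cite{Bayer} for the isometry $I_{\delta_{\ell}}\cong\mathbb{A}_{\ell-1}$, you verify it directly via $x\mapsto(1-\zeta_{\ell})x$ using the cyclotomic realization of $\mathbb{A}^{(1)}_{\ell-1}$, which is a correct and self-contained substitute.
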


\begin{proof}
The isometry on the left hand has been obtained in Proposition \ref{TheShapeIs}. Using Theorem \ref{ShapeTame}, and the isometry on the left,  one obtains the second isometry in the case $L/\Q$ is a tame extension. The isometry on the right in the wildly ramified case is  ensured by Theorem \ref{ShapeWild}. To conclude, we only have to show that in the case of wild ramification we have an embedding
\[\left\langle O_{L}^{0}, \frac{1}{{\rm rad}(d_{L})}q_{L} \right\rangle \hookrightarrow  \mathbb{A}_{\ell -1}.\]
Using Lemma \ref{CraigLattice} and the part of this Theorem we already proved,  this is equivalent  to verify the existence of an embedding
\begin{equation}\label{equation} I_{\alpha_{\ell}} \hookrightarrow  \mathbb{A}_{\ell -1}. \end{equation} Recall the notation introduced in subsection \ref{ideal}: \[\alpha_{\ell} =\frac{(\zeta_{\ell} - \zeta_{\ell}^{-1})^{2(\ell-1)}}{\ell^2} \  and  \ \delta_{\ell} =\frac{(2- \zeta_{\ell} - \zeta^{-1}_{\ell})}{\ell} .\]
Since the root lattice $\mathbb{A}_{\ell -1}$ is isometric to $I_{\delta}$ (see \cite[\S3 The root lattice $\mathbb{A}_{p-1}$]{Bayer}) the existence of an isometric embedding  (\ref{equation}) is equivalent to the existence of an embedding \[I_{\alpha_{\ell}} \hookrightarrow I_{\delta_{\ell}}.\] The above is ensured thanks to Corollary \ref{pasting}. 
\end{proof}

\noindent
Guillermo Mantilla-Soler\\
Departamento de Matem\'aticas,\\
Universidad de los Andes, \\
Carrera 1 N. 18A - 10, Bogot\'a, \\
Colombia.\\
g.mantilla691@uniandes.edu.co\\

\noindent
Marina Monsurr\`o\\
Universit\`a Eropea di Roma,\\
Rome, Italy.


\begin{thebibliography}{99}

\bibitem[Ba]{Bayer0} E. Bayer-Fuckiger \textit{Lattices and Number Fields}, Contemp. Math {\bf 24}, 69-84 (1999).

\bibitem[Ba1]{Bayer1} E. Bayer-Fuckiger \textit{Cyclotomic Modular Lattices}, J. Th\'eorie Nombres Bordeaux {\bf 12}, 273-280 (2000).

\bibitem[Ba2]{Bayer2} E. Bayer-Fuckiger \textit{Ideal Lattices. In: A Panorama of Number Theory or the View from Baker's Garden },G. Wustholz, ed., 164-168, Cambridege (2002).

\bibitem[Ba-Le]{Bayer3} E. Bayer-Fuckiger, H.W. Lenstra \textit{Forms in odd degree extensions and self-dual normal bases}, Amer. J. Math {\bf 112}, 359-373 (1990).

\bibitem[Ba-Su]{Bayer} E. Bayer-Fuckiger, I. Suarez \textit{Ideal lattices over totally real number fields
and Euclidean minima}, Arch. Math. {\bf 86}, 217-225 (2006).

\bibitem[Bac-Bat]{Bachoc} C. Bachoc, C. Batut \textit{\'Etude Algorithmique de R\'eseaux Construits avec la Form Trace}, Experiment. Math. Volume 1, Issue 3, 183-190 (1992).


\bibitem[Bha-Har]{bha} M. Bhargava, P. Harron, \textit{The equidistribution of lattice shapes of rings of integers in cubic, quartic, and quintic number fields},
preprint (2013).

\bibitem[Bha-Sha]{bha1} M. Bhargava, A. Shnidman \textit{On the number of cubic orders of bounded discriminant having automorphism group $C_3$, and related problems},
Algebra Number Theory, \textbf{8-1} (2014), 53-88.


\bibitem[C-P]{conner} P.E. Conner, R. Perlis, \textit{ A survey of trace forms of algebraic number
fields}, World Scientific, Singapore, 1984.

\bibitem[C-S]{conway} J.H. Conway, N.J.A. Sloane, \textit{Sphere packings, lattices and groups}, Third edition., Springer-Verlag New York, Inc. (1999).

\bibitem[Man]{Manti} G. Mantilla-Soler, \textit{Integral trace forms associated to cubic extensions.}, Algebra Number Theory, \textbf{4-6} (2010), 681-699.

\bibitem[Man1]{Manti1} G. Mantilla-Soler, \textit{On number fields with equivalent integral trace forms}, International Journal of Number Theory, \textbf{8-7} (2012), 1569-1580.


\bibitem[S]{Serre} J-P. Serre, \textit{Local fields}, Graduate Texts in Mathematics, \textbf{67}. Springer-Verlag, New York-Berlin, 1979. viii+241 pp.


\end{thebibliography}
\end{document}